\newtheorem{theorem}{Theorem}[section]
\newtheorem{lemma}[theorem]{Lemma}
\newtheorem{proposition}[theorem]{Proposition}
\newtheorem{corollary}[theorem]{Corollary}
\theoremstyle{definition}
\theoremstyle{remark}
\newtheorem{remark}[theorem]{Remark}
\numberwithin{equation}{section}
\begin{document}

\title{Inequalities for modified Bessel functions and their integrals}

%    Information for first author
\author{Robert E. Gaunt}
%    Address of record for the research reported here
\address{Department of Statistics, University of Oxford, Oxford, UK}
\email{gaunt@stats.ox.ac.uk}
%    Current address
%    \thanks will become a 1st page footnote.
\thanks{The author is supported by EPSRC research grant AMRYO100.}

%    General info
\subjclass[2000]{Primary 33C10}

\date{June 2014.}

\keywords{Modified Bessel functions}

\begin{abstract}Simple inequalities for some integrals involving the modified Bessel functions $I_{\nu}(x)$ and $K_{\nu}(x)$ are established.  We also obtain a monotonicity result for $K_{\nu}(x)$ and a new lower bound, that involves gamma functions, for $K_0(x)$. 
\end{abstract}

\maketitle

\section{Introduction and preliminary results}
In the developing Stein's method for Variance-Gamma distributions, Gaunt \cite{me} required simple bounds, in terms of modified Bessel functions, for the integrals
\[\int_0^x\mathrm{e}^{\beta t}t^{\nu}I_{\nu}(t)\,\mathrm{d}t \qquad\mbox{and} \qquad \int_x^{\infty}\mathrm{e}^{\beta t}t^{\nu}K_{\nu}(t)\,\mathrm{d}t,\]
where $x>0$, $\nu>-1/2$ and $-1<\beta<1$.  Closed form expressions for these integrals, in terms of modified Bessel functions and the modified Struve function $\mathbf{L}_{\nu}(x)$, do in fact exist for the case $\beta=0$.  For $z\in\mathbb{C}$ and $\nu\in\mathbb{C}$, let $\mathscr{L}_{\nu}(z)$ denote $I_{\nu}(z)$, $ \mathrm{e}^{\nu \pi i}K_{\nu}(z)$ or any linear combination of these functions, in which the coefficients are independent of $\nu$ and $z$.  From formula 10.43.2 of Olver et$.$ al$.$ \cite{olver} we have, for $\nu \not= -1/2$,
\begin{equation}\label{besint}\int z^{\nu}\mathscr{L}_{\nu}(z)\,\mathrm{d}z =\sqrt{\pi}2^{\nu-1}\Gamma(\nu+1/2)z(\mathscr{L}_{\nu}(z)\mathbf{L}_{\nu-1}(z)-\mathscr{L}_{\nu-1}(z)\mathbf{L}_{\nu}(z)). 
\end{equation}
Whilst formula (\ref{besint}) holds for complex-valued $z$ and $\nu$, throughout this paper we shall restrict our attention to the case of real-valued $z$ and $\nu$.  There are no closed form expressions in terms of modified Bessel and Struve functions in the literature for the integrals for the case $\beta\not=0$.  Moreover, even in the case $\beta=0$ the expression on the right-hand side of formula (\ref{besint}) is a complicated expression involving the modified Struve function $\mathbf{L}_{\nu}(x)$.  This provides the motivation for establishing simple bounds, in terms of modified Bessel functions, for the integrals defined in the first display. 

In this paper we establish, through the use of elementary properties of modified Bessel functions and straightforward calculations, simple bounds, that involve modified Bessel functions, for the integrals given in the first display.  Our bounds prove to be very useful when applied to calculations that arise in the study of Stein's method for Variance-Gamma distributions.  We also obtain a monotonicity result and bound for the modified Bessel function of the second kind $K_{\nu}(x)$, as well as a simple but remarkably tight lower bound for $K_0(x)$.  These bounds are, again, motivated by the need for such bounds in the study of Stein's method for Variance-Gamma distributions.  However, the bounds obtained in this paper may also prove to be useful in other problems related to modified Bessel functions; see for example, Baricz and Sun \cite{baricz3} in which inequalities for modified Bessel functions of the first kind were used to obtain lower and upper bounds for integrals of involving modified Bessel functions of the first kind.    Throughout this paper we make use of some elementary properties of modified Bessel functions and these are stated in the appendix.

\section{Inequalities for integrals involving modified Bessel functions}

Before presenting our first result concerning inequalities for integrals of modified Bessel functions, we introduce some notation for the repeated integral of the function $\mathrm{e}^{\beta x}x^{\nu}I_{\nu}(x)$, which will be used in the following theorem.  We define
\begin{equation} \label{super1} I_{(\nu,\beta,0)}(x)=\mathrm{e}^{\beta x}x^{\nu}I_{\nu}(x), \quad I_{(\nu,\beta,n+1)}(x)=\int_0^xI_{(\nu,\beta,n)}(y)\,\mathrm{d}y,\quad n =0,1,2,3,\ldots.
\end{equation}
With this notation we have:

\begin{theorem} \label{tiger} Let $0\leq \gamma <1$, then the following inequalities hold for all $x>0$
\begin{equation}\label{lowerint}\int_0^xt^{\nu}I_{\nu}(t)\,\mathrm{d}t > x^{\nu}I_{\nu+1}(x), \quad \nu >-1,
\end{equation}
\begin{equation}\label{100fcp}\int_0^xt^{\nu}I_{\nu}(t)\,\mathrm{d}t <x^{\nu}I_{\nu}(x), \quad \nu\geq 1/2,
\end{equation}
\begin{equation}\label{2003001}I_{(\nu,0,n+1)}(x) < I_{(\nu,0,n)}(x), \quad \nu \geq 1/2,
\end{equation}
\begin{equation}\label{45210}I_{(\nu,-\gamma,n)}(x) \leq \frac{1}{(1-\gamma)^n}\mathrm{e}^{-\gamma x}I_{(\nu,0,n)}(x), \quad \nu \geq 1/2, \: n=0,1,2,\ldots,
\end{equation}
\begin{equation}\label{intIi} \int_0^x t^{\nu} I_{\nu +n}(t)\,\mathrm{d}t < \frac{2(\nu + n +1)}{2\nu +n +1} x^{\nu} I_{\nu + n+1}(x), \quad \nu >-1/2, \: n\ge 0,
\end{equation}
\begin{equation}\label{intIii}I_{(\nu,0,n)}(x) < \bigg\{\prod_{k=1}^n \frac{2\nu + 2k}{2\nu + k}\bigg\} x^{\nu} I_{\nu + n}(x), \quad \nu \geq 0, \: n=1,2,3\ldots,
\end{equation} 
\begin{equation*} \label{owl} I_{(\nu,-\gamma,n)}(x) < \frac{1}{(1-\gamma)^n}\bigg\{\prod_{k=1}^n \frac{2\nu + 2k}{2\nu + k}\bigg\} \mathrm{e}^{-\gamma x}x^{\nu} I_{\nu + n}(x), \quad \nu \geq 1/2, \: n=1,2,3,\ldots.
\end{equation*}
\end{theorem}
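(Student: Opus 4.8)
The plan is to obtain this final bound as an immediate corollary of the two preceding inequalities (\ref{45210}) and (\ref{intIii}), which is presumably why it is stated in the theorem without its own displayed proof. One first checks that the stated hypotheses $\nu\geq 1/2$, $n=1,2,3,\ldots$ are compatible with both ingredients: inequality (\ref{45210}) requires $\nu\geq 1/2$ and $n=0,1,2,\ldots$, while inequality (\ref{intIii}) requires only $\nu\geq 0$ and $n=1,2,3,\ldots$, and both of these are implied by $\nu\geq 1/2$ and $n\geq 1$.

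Next I would chain the two estimates. Applying (\ref{45210}) gives
\[ I_{(\nu,-\gamma,n)}(x) \leq \frac{1}{(1-\gamma)^n}\mathrm{e}^{-\gamma x}I_{(\nu,0,n)}(x), \]
and then bounding the repeated integral $I_{(\nu,0,n)}(x)$ appearing on the right by (\ref{intIii}) yields
\[ I_{(\nu,0,n)}(x) < \bigg\{\prod_{k=1}^n \frac{2\nu + 2k}{2\nu + k}\bigg\} x^{\nu} I_{\nu + n}(x). \]
Since $0\leq\gamma<1$, the multiplicative constants $\frac{1}{(1-\gamma)^n}$ and $\mathrm{e}^{-\gamma x}$ are both strictly positive, so multiplying the strict inequality (\ref{intIii}) through by them preserves strictness; combining with the (non-strict) bound from (\ref{45210}) produces an overall strict inequality, namely the asserted
\[ I_{(\nu,-\gamma,n)}(x) < \frac{1}{(1-\gamma)^n}\bigg\{\prod_{k=1}^n \frac{2\nu + 2k}{2\nu + k}\bigg\} \mathrm{e}^{-\gamma x}x^{\nu} I_{\nu + n}(x). \]

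I expect essentially no obstacle here: the only points requiring care are the bookkeeping that a $\leq$ followed by a $<$ gives a $<$, and the positivity of the two constants that are used to scale the inequalities. The substantive content lies entirely in (\ref{45210}) and (\ref{intIii}), both of which may be invoked from earlier in the theorem. One may also note the degenerate case $\gamma=0$, for which (\ref{45210}) holds with equality and constant $1$, so the claimed inequality simply reduces to (\ref{intIii}).
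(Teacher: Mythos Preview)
Your argument for the final inequality is correct and matches the paper's own proof of part (vii), which simply states that it follows from inequalities (\ref{45210}) and (\ref{intIii}). Your additional remarks on hypothesis compatibility, the positivity of the scaling factors, and the $\leq$ followed by $<$ giving $<$ just make explicit what the paper leaves implicit.
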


\begin{proof}
(i) From the differentiation formula (\ref{diffone}) we have that
\[\int_0^xt^{\nu}I_{\nu}(t)\,\mathrm{d}t =\int_0^x\frac{1}{t}t^{\nu+1}I_{\nu}(t)\,\mathrm{d}t >\frac{1}{x}\int_0^xt^{\nu+1}I_{\nu}(t)\,\mathrm{d}t =x^{\nu}I_{\nu+1}(x),\]
since by (\ref{Itend0}) we have $\lim_{x\downarrow 0}x^{\nu+1}I_{\nu+1}(x)=0$ for $\nu>-1$.

(ii) Using inequality (\ref{Imon}) and then applying (\ref{diffone}) we get
\[\int_0^xt^{\nu}I_{\nu}(t)\,\mathrm{d}t <\int_0^xt^{\nu}I_{\nu-1}(t)\,\mathrm{d}t=x^{\nu}I_{\nu}(x).\]

(iii) From inequality (\ref{100fcp}), we have 
\[I_{(\nu,0,1)}(x)<I_{(\nu,0,0)}(x).\]
Integrating both sides of the above display $n$ times with respect to $x$ yields the desired inequality.

(iv) We prove the result by induction on $n$.  The result is trivially true for $n=0$.  Suppose the result is true for $n=k$.  From the inductive hypothesis we have
\begin{equation} \label{thing} I_{(\nu,-\gamma,k+1)}(x) = \int_0^x I_{(\nu,-\gamma,k)}(t)\,\mathrm{d}t \leq \frac{1}{(1-\gamma)^k}\int_0^x \mathrm{e}^{-\gamma t}I_{(\nu,0,k)}(t)\,\mathrm{d}t.
\end{equation}
Integration by parts and inequality (\ref{2003001}) gives
\begin{align*} \int_0^x \mathrm{e}^{-\gamma t} I_{(\nu,0,k)}(t) \,\mathrm{d}t &= \mathrm{e}^{-\gamma x}I_{(\nu,0,k+1)}(x) + \gamma \int_0^x \mathrm{e}^{-\gamma t} I_{(\nu,0,k+1)}(t) \,\mathrm{d}t \\ &< \mathrm{e}^{-\gamma x}I_{(\nu,0,k+1)}(x) + \gamma \int_0^x \mathrm{e}^{-\gamma t} I_{(\nu,0,k)}(t) \,\mathrm{d}t. 
\end{align*}
Rearranging we obtain
\[\int_0^x \mathrm{e}^{-\gamma t} I_{(\nu,0,k)}(t) \,\mathrm{d}t < \frac{1}{1-\gamma} \mathrm{e}^{-\gamma x}I_{(\nu,0,k+1)}(x),\]
and substituting into (\ref{thing}) gives 
\[I_{(\nu,-\gamma,k+1)}(x) < \frac{1}{(1-\gamma)^{k+1}}\mathrm{e}^{-\gamma x}I_{(\nu,0,k+1)}(x).\]
Hence the result has been proved by induction.

(v) From the differentiation formula (\ref{diffone}) and identity (\ref{Iidentity}) we get that
\begin{align*} \frac{\mathrm{d}}{\mathrm{d}t} (t^{\nu} I_{\nu +n+1} (t)) &= \frac{\mathrm{d}}{\mathrm{d}t}(t^{-(n+1)} \cdot t^{\nu +n+1} I_{\nu+n+1} (t)) \\
& = t^{\nu} I_{\nu +n} (t) -(n+1)t^{\nu -1} I_{\nu +n+1}(t) \\
& = t^{\nu} I_{\nu +n} (t) - \frac{n+1}{2(\nu +n+1)} t^{\nu} I_{\nu+n} (t) + \frac{n+1}{2(\nu +n+1)} t^{\nu} I_{\nu +n+2} (t) \\
& = \frac{2\nu +n+1}{2(\nu +n+1)} t^{\nu} I_{\nu +n} (t) + \frac{n+1}{2(\nu +n+1)} t^{\nu} I_{\nu +n+2} (t). 
\end{align*}
Integrating both sides over $(0,x)$, applying the fundamental theorem of calculus and rearranging gives
\[\int_0^x t^{\nu} I_{\nu +n} (t)\,\mathrm{d}t = \frac{2(\nu +n+1)}{2\nu +n+1} x^{\nu} I_{\nu +n+1} (x) - \frac{n+1}{2\nu +n+1} \int_0^x t^{\nu} I_{\nu +n +2} (t)\,\mathrm{d}t.\]
The result now follows from the fact that $I_{\nu} (x) > 0$ for $x > 0$ and by the positivity of the integral.

(vi) From inequality (\ref{intIi}) we have
\[I_{(\nu,0,1)} (x) = \int_0^x t^{\nu} I_{\nu} (t)\,\mathrm{d}t < \frac{2(\nu +1)}{2\nu +1} x^{\nu} I_{\nu+1} (x),\]
and
\begin{align*}
I_{(\nu,0,2)} (x) &= \int_0^x I_{(\nu,0,1)} (t)\,\mathrm{d}t \\
&< \frac{2(\nu +1)}{2\nu +1} \int_0^x t^{\nu} I_{\nu+1} (t)\,\mathrm{d}t \\
&< \frac{2(\nu +1)}{2\nu +1} \frac{2(\nu +2)}{2\nu +2} x^{\nu} I_{\nu +2} (x).
\end{align*}
Iterating gives the result.

(vii) This follows from inequalities (\ref{45210}) and (\ref{intIii}).
\end{proof}

We now state a simple lemma (which is a special case of Lemma 2.4 of Ismail and Muldoon \cite{ismail1}), that gives a monotonicity result for the ratio $\frac{K_{\nu-1}(x)}{K_{\nu}(x)}$.  The lemma has an immediate corollary, which we will make use of in the proof of our next theorem.

\begin{lemma}\label{bobcatttt}Suppose $x>0$, then the function $\frac{K_{\nu-1}(x)}{K_{\nu}(x)}$ is strictly monotone increasing for $\nu>1/2$, is constant for $\nu=1/2$, and is strictly monotone decreasing for $\nu<1/2$.  
\end{lemma}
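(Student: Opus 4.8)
The plan is to isolate the case $\nu > 1/2$ and recover the other two cases from the reflection formula $K_{-\mu}(x) = K_\mu(x)$. For $\nu = 1/2$ this gives $K_{-1/2}(x) = K_{1/2}(x)$, so the ratio is identically $1$. For $\nu < 1/2$, set $\mu = 1 - \nu > 1/2$; applying the reflection formula twice yields $K_{\nu-1}(x)/K_\nu(x) = K_\mu(x)/K_{\mu-1}(x) = \big(K_{\mu-1}(x)/K_\mu(x)\big)^{-1}$, and since $K$ is positive and, by the case $\nu > 1/2$, $K_{\mu-1}(x)/K_\mu(x)$ is strictly increasing, its reciprocal is strictly decreasing. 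So everything reduces to showing that $q(x) := K_{\nu-1}(x)/K_\nu(x)$ is strictly increasing on $(0,\infty)$ when $\nu > 1/2$.

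First I would turn this into a differential identity. Using the standard recurrences $K_\mu'(x) = -K_{\mu-1}(x) - \tfrac{\mu}{x}K_\mu(x)$ (with $\mu = \nu$) and $K_\mu'(x) = -K_{\mu+1}(x) + \tfrac{\mu}{x}K_\mu(x)$ (with $\mu = \nu-1$) together with the quotient rule, one finds after cancellation the Riccati identity
\[ q'(x) = q(x)^2 + \frac{2\nu - 1}{x}\,q(x) - 1, \qquad x > 0. \]
The right-hand side is not manifestly positive, so a genuine argument is needed, and this is where the main difficulty lies. My approach is to compare $q$ with the "moving equilibrium" $\phi(x)$, defined as the positive root of $t^2 + \tfrac{2\nu-1}{x}t - 1 = 0$, i.e.\ $\phi(x) = \tfrac{1}{2}\big(\sqrt{(2\nu-1)^2/x^2 + 4} - (2\nu-1)/x\big)$. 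Two easy facts: $\phi$ is strictly increasing on $(0,\infty)$, with $0 < \phi(x) < 1$ (the quadratic equals $(2\nu-1)/x > 0$ at $t = 1$); and since the quadratic is increasing in $t>0$, the sign of $q'(x)$ agrees with the sign of $q(x) - \phi(x)$ (recall $q > 0$ because $K_\mu > 0$). Hence it is enough to prove $q(x) > \phi(x)$ for every $x > 0$.

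I would prove this by contradiction. Suppose $q(x_1) \le \phi(x_1)$ for some $x_1$. If the inequality is strict, then on the region $\{0 < q < \phi\}$ we have $q' < 0$ while $\phi$ is increasing, so a routine "a trajectory cannot re-cross an increasing barrier" argument gives $q(x) < \phi(x)$, hence $q'(x) < 0$, for all $x \ge x_1$; then $q$ is decreasing and bounded below by $0$, so $q(x) \to L$ with $0 \le L \le q(x_1) < 1$, and the Riccati identity forces $q'(x) \to L^2 - 1 < 0$, pushing $q \to -\infty$ and contradicting $q > 0$. If instead $q(x_1) = \phi(x_1)$, then — having just excluded $q < \phi$ anywhere — the function $q - \phi$ attains its minimum value $0$ at $x_1$, so $q'(x_1) = \phi'(x_1) > 0$, whereas the defining property of $\phi$ gives $q'(x_1) = \phi(x_1)^2 + \tfrac{2\nu-1}{x_1}\phi(x_1) - 1 = 0$, a contradiction. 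Therefore $q > \phi$ throughout $(0,\infty)$, so $q' > 0$, completing the case $\nu > 1/2$ and hence the lemma. The main obstacle is this comparison with the moving barrier $\phi$; the reflection-formula reductions and the derivation of the Riccati identity are routine.
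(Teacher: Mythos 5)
Your proof is correct, but it is worth pointing out that it is not the paper's route at all: the paper offers no proof of Lemma \ref{bobcatttt}, simply quoting it as a special case of Lemma 2.4 of Ismail and Muldoon \cite{ismail1} (whose argument rests on integral/product representations of $K_\nu$), and the only in-paper computation in this direction is the later Proposition, which goes the \emph{other} way, using the lemma together with the identity $\frac{\mathrm{d}}{\mathrm{d}x}\big(K_{\nu-1}(x)/K_\nu(x)\big)=\big(\Delta_{\nu-1}(x)-\Delta_\nu(x)\big)/\big(2K_\nu^2(x)\big)$ to deduce the Tur\'an-type inequality. Your argument is a self-contained alternative: the reduction of $\nu\le 1/2$ to $\nu>1/2$ via $K_{-\mu}(x)=K_\mu(x)$ is fine; the Riccati identity $q'(x)=q(x)^2+\frac{2\nu-1}{x}q(x)-1$ follows from (\ref{cat111}) and (\ref{cat222}) by the quotient rule exactly as you say; and the barrier comparison is sound, since the quadratic $t\mapsto t^2+\frac{2\nu-1}{x}t-1$ is increasing on $t>0$, so $\operatorname{sgn} q'=\operatorname{sgn}(q-\phi)$, a first crossing below the strictly increasing barrier $\phi\in(0,1)$ is impossible, the resulting monotone limit $L<1$ would force $q'\to L^2-1<0$ and hence $q\to-\infty$ against $K_\nu>0$, and the tangency case $q(x_1)=\phi(x_1)$ dies because an interior minimum of $q-\phi$ would require $q'(x_1)=\phi'(x_1)>0$ while the definition of $\phi$ gives $q'(x_1)=0$. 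What your approach buys is a proof that uses nothing beyond the recurrence/differentiation formulas and positivity of $K_\nu$ already listed in the appendix, making the paper's development self-contained; what the citation buys the paper is brevity and access to the more general monotonicity statement of Ismail and Muldoon. If you wanted to shorten your write-up further, the same Riccati identity combined with the known Tur\'an-type bounds on $K_{\nu-1}K_{\nu+1}-K_\nu^2$ would also do, but that would reintroduce exactly the kind of external input your barrier argument avoids.
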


\begin{corollary}\label{bobcat}For $\nu>1/2$ and $\alpha>1$ the equation $K_{\nu}(x)=\alpha K_{\nu-1}(x)$ has one root in the region $x>0$.
\end{corollary}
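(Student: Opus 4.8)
The plan is to turn the equation into a level-set problem for the monotone ratio supplied by Lemma~\ref{bobcatttt}. Since $K_\nu(x)>0$ for $x>0$, dividing through by $K_\nu(x)$ shows that $K_\nu(x)=\alpha K_{\nu-1}(x)$ is equivalent to
\[ h(x):=\frac{K_{\nu-1}(x)}{K_\nu(x)}=\frac1\alpha . \]
The function $h$ is real-analytic on $(0,\infty)$ (ratio of smooth functions with positive denominator), and by Lemma~\ref{bobcatttt}, for $\nu>1/2$ it is strictly increasing there. Hence it suffices to prove that $h$ maps $(0,\infty)$ onto the open interval $(0,1)$: since $0<1/\alpha<1$ whenever $\alpha>1$, the intermediate value theorem then produces a root, and strict monotonicity forces it to be unique.

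So the remaining work is just the two endpoint limits. As $x\to\infty$ I would invoke the standard large-argument asymptotic $K_\mu(x)\sim\sqrt{\pi/(2x)}\,\mathrm{e}^{-x}$, which holds for every fixed real order $\mu$ and immediately gives $h(x)\to1$. As $x\to0^+$ I would use the small-argument behaviour $K_\mu(x)\sim\tfrac12\Gamma(\mu)(2/x)^{\mu}$ for $\mu>0$ and $K_0(x)\sim\log(2/x)$, together with $K_{-\mu}=K_\mu$; since $\nu>\abs{\nu-1}$ whenever $\nu>1/2$, the singularity of $K_\nu$ at the origin is of strictly higher order than that of $K_{\nu-1}=K_{\abs{\nu-1}}$, so $h(x)\to0$. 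Combining monotonicity with these two limits yields $h((0,\infty))=(0,1)$ and completes the argument.

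I expect the only real obstacle to be the bookkeeping in the $x\to0^+$ limit, and in particular checking that $\nu>\abs{\nu-1}$ for all $\nu>1/2$ (so that $K_\nu$ genuinely dominates $K_{\nu-1}$ near $0$), which is mildly cleaner to organise as a short case split into $1/2<\nu<1$, $\nu=1$, and $\nu>1$; everything else is an immediate consequence of Lemma~\ref{bobcatttt} and continuity.
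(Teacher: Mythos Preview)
Your proposal is correct and follows essentially the same route as the paper: rewrite the equation as $K_{\nu-1}(x)/K_{\nu}(x)=1/\alpha$, use Lemma~\ref{bobcatttt} for strict monotonicity, and compute the endpoint limits $0$ and $1$ from the small- and large-argument asymptotics of $K_{\mu}$ to invoke the intermediate value theorem. The only difference is that you spell out the $\nu>|\nu-1|$ bookkeeping for the $x\to0^+$ limit more explicitly than the paper does.
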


\begin{proof}
From the asymptotic formulas (\ref{Ktend0}) and (\ref{Ktendinfinity}), it follows that for $\nu>1/2$,
\[\lim_{x\downarrow 0}\frac{K_{\nu-1}(x)}{K_{\nu}(x)}=0, \qquad \mbox{and} \qquad \lim_{x\rightarrow\infty}\frac{K_{\nu-1}(x)}{K_{\nu}(x)}=1. \]
Since $\frac{K_{\nu-1}(x)}{K_{\nu}(x)}$ is strictly monotone increasing on $(0,\infty)$, it follows that for $\alpha>1$ the equation $K_{\nu}(x)=\alpha K_{\nu-1}(x)$ (i.e. $\frac{K_{\nu-1}(x)}{K_{\nu}(x)}=\frac{1}{\alpha}$) has one root in the region $x>0$.  
\end{proof}

As an aside, we note that Lemma \ref{bobcat} allows us to easily establish an inequality for the Tur\'{a}nian $\Delta_{\nu}(x)=K_{\nu}^2(x)-K_{\nu-1}(x)K_{\nu+1}(x)$ (for more details on the Tur\'{a}nian $\Delta_{\nu}(x)$ see Baricz \cite{baricz1}).

\begin{proposition}Suppose $x>0$, then $\Delta_{\nu}(x)<\Delta_{\nu-1}(x)$ for $\nu>1/2$, $\Delta_{1/2}(x)=\Delta_{-1/2}(x)$, and $\Delta_{\nu}(x)>\Delta_{\nu-1}(x)$ for $\nu<1/2$.
\end{proposition}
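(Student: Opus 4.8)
The plan is to express the Tur\'{a}nian $\Delta_{\nu}(x)$ in a form that makes the dependence on the ratio $K_{\nu-1}(x)/K_{\nu}(x)$ transparent, and then invoke Lemma \ref{bobcatttt}. First I would use the recurrence relation for modified Bessel functions of the second kind, namely $K_{\nu+1}(x) = K_{\nu-1}(x) + \frac{2\nu}{x}K_{\nu}(x)$ (this is one of the standard identities that should be available in the appendix), to eliminate $K_{\nu+1}(x)$ from the definition $\Delta_{\nu}(x) = K_{\nu}^2(x) - K_{\nu-1}(x)K_{\nu+1}(x)$. This gives
\[
\Delta_{\nu}(x) = K_{\nu}^2(x) - K_{\nu-1}^2(x) - \frac{2\nu}{x}K_{\nu-1}(x)K_{\nu}(x).
\]
Dividing through by $K_{\nu}^2(x)$ (which is positive for $x>0$) and writing $r_{\nu}(x) = K_{\nu-1}(x)/K_{\nu}(x)$, we obtain
\[
\frac{\Delta_{\nu}(x)}{K_{\nu}^2(x)} = 1 - r_{\nu}(x)^2 - \frac{2\nu}{x}r_{\nu}(x).
\]

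Next I would do the same for $\Delta_{\nu-1}(x)$, this time using the recurrence to eliminate $K_{\nu}(x)$ in terms of $K_{\nu-2}(x)$ and $K_{\nu-1}(x)$: $K_{\nu}(x) = K_{\nu-2}(x) + \frac{2(\nu-1)}{x}K_{\nu-1}(x)$, so that
\[
\Delta_{\nu-1}(x) = K_{\nu-1}^2(x) - K_{\nu-2}(x)K_{\nu}(x) = K_{\nu-1}^2(x) - \big(K_{\nu}(x) - \tfrac{2(\nu-1)}{x}K_{\nu-1}(x)\big)K_{\nu}(x),
\]
which rearranges to
\[
\frac{\Delta_{\nu-1}(x)}{K_{\nu}^2(x)} = r_{\nu}(x)^2 - 1 + \frac{2(\nu-1)}{x}r_{\nu}(x).
\]
Adding the two expressions, the quadratic terms in $r_{\nu}(x)$ cancel and we get the clean identity
\[
\frac{\Delta_{\nu}(x) + \Delta_{\nu-1}(x)}{K_{\nu}^2(x)} = -\frac{2\nu}{x}r_{\nu}(x) + \frac{2(\nu-1)}{x}r_{\nu}(x) = -\frac{2}{x}r_{\nu}(x),
\]
i.e. $\Delta_{\nu}(x) + \Delta_{\nu-1}(x) = -\frac{2}{x}K_{\nu-1}(x)K_{\nu}(x)$. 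This does not by itself give the comparison, so instead I would subtract: from the two displayed expressions, $\frac{\Delta_{\nu}(x) - \Delta_{\nu-1}(x)}{K_{\nu}^2(x)} = 2 - 2r_{\nu}(x)^2 - \frac{2(2\nu-1)}{x}r_{\nu}(x)$.

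The remaining task is to determine the sign of $f(x) := 2 - 2r_{\nu}(x)^2 - \frac{2(2\nu-1)}{x}r_{\nu}(x)$. The factor $2\nu - 1$ already has the sign matching the trichotomy in the statement (positive for $\nu > 1/2$, zero for $\nu = 1/2$, negative for $\nu < 1/2$), and when $\nu = 1/2$ one checks directly that $r_{1/2}(x) \equiv 1$ (by Lemma \ref{bobcatttt} the ratio is constant there, and its limiting value is $1$ by Corollary \ref{bobcat}'s proof), so $f(x) = 2 - 2 = 0$, giving $\Delta_{1/2}(x) = \Delta_{-1/2}(x)$. For the general case I would establish the bound $r_{\nu}(x) < 1$ for $\nu > 1/2$ and $r_{\nu}(x) > 1$ for $\nu < 1/2$ — this follows from Lemma \ref{bobcatttt} together with the limit $r_{1/2}(x) = 1$, since $r_{\nu}(x)$ is increasing in $\nu$. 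When $\nu > 1/2$ we have $r_{\nu}(x) < 1$ so $2 - 2r_{\nu}(x)^2 > 0$, but the term $-\frac{2(2\nu-1)}{x}r_{\nu}(x)$ is negative, so these two observations alone are not conclusive; the main obstacle will be handling this competition of signs.

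To resolve it I would sharpen the estimate on $r_{\nu}(x)$: the standard lower bound $r_{\nu}(x) = K_{\nu-1}(x)/K_{\nu}(x) > \frac{x}{x + (2\nu - 1)/2 + \sqrt{\cdots}}$-type inequalities, or more simply the bound coming from the recurrence and positivity, $K_{\nu+1}(x)K_{\nu-1}(x) > 0$ giving $\Delta_{\nu}(x) < K_{\nu}^2(x)$, are likely not enough. The cleaner route is probably to observe that $\Delta_{\nu}(x) > 0$ (a known fact, the Tur\'an inequality for $K_{\nu}$, which I would cite from Baricz \cite{baricz1}), so from $\Delta_{\nu}(x)/K_{\nu}^2(x) = 1 - r_{\nu}(x)^2 - \frac{2\nu}{x}r_{\nu}(x) > 0$ and from $\Delta_{\nu-1}(x)/K_{\nu}^2(x) = r_{\nu}(x)^2 - 1 + \frac{2(\nu-1)}{x}r_{\nu}(x) > 0$ we directly get bounds pinning $r_{\nu}(x)$, and then monotonicity of $r_{\nu}$ in $\nu$ (Lemma \ref{bobcatttt}) transfers the comparison: since increasing $\nu$ increases $r_{\nu}$, and the map $\nu \mapsto \Delta_{\nu}(x)$ decomposes through $r_{\nu}(x)$ and the explicit coefficient $\nu$, one checks that $\Delta_{\nu}(x)$ as a function built from these is decreasing in $\nu$ on the relevant range. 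I expect the cleanest write-up simply differentiates or compares $\Delta_{\nu}$ and $\Delta_{\nu-1}$ via the subtraction identity above, reducing everything to the single inequality $r_{\nu}(x) \lessgtr 1$, which is exactly what Lemma \ref{bobcatttt} and the value $r_{1/2}(x) = 1$ provide — so the proposition follows once the sign of $f(x)$ is pinned down using that $r_{\nu}(x)^2 + \frac{2\nu-1}{x}r_{\nu}(x) - 1$ has the same sign as $r_{\nu}(x) - 1$, which can be seen by factoring using $\Delta_{\nu}(x), \Delta_{\nu-1}(x) > 0$.
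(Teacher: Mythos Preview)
Your approach has a genuine gap and also misreads the key lemma. Lemma~\ref{bobcatttt} asserts that, for each fixed $\nu$, the map $x\mapsto r_\nu(x)=K_{\nu-1}(x)/K_\nu(x)$ is monotone \emph{in $x$} (increasing for $\nu>1/2$, constant for $\nu=1/2$, decreasing for $\nu<1/2$); it says nothing about monotonicity in $\nu$. So the line ``since $r_\nu(x)$ is increasing in $\nu$'' is not supported by that lemma (and in fact the direction is wrong: $r_\nu(x)$ decreases in $\nu$ past $1/2$). The bounds $r_\nu(x)<1$ for $\nu>1/2$ and $r_\nu(x)>1$ for $\nu<1/2$ are true, but they come from (\ref{Kmoni})--(\ref{cake}), not from Lemma~\ref{bobcatttt}.

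More seriously, your endgame does not close. After the (correct) computation
\[
\frac{\Delta_\nu(x)-\Delta_{\nu-1}(x)}{K_\nu(x)^2}=2-2r_\nu(x)^2-\frac{2(2\nu-1)}{x}\,r_\nu(x),
\]
you need the sign of $g(r):=r^2+\frac{2\nu-1}{x}r-1$ at $r=r_\nu(x)$. Knowing only $r_\nu(x)<1$ (for $\nu>1/2$) does not decide this: $g(0)=-1<0$ while $g(1)=\frac{2\nu-1}{x}>0$, so $g$ changes sign on $(0,1)$. Your appeal to the Tur\'an inequality has the wrong sign (for $K_\nu$ one has $\Delta_\nu(x)<0$, not $>0$), and even with the correct sign the two inequalities $\Delta_\nu<0$ and $\Delta_{\nu-1}<0$ only give
\[
r_\nu(x)^2+\tfrac{2(\nu-1)}{x}r_\nu(x)<1<r_\nu(x)^2+\tfrac{2\nu}{x}r_\nu(x),
\]
which brackets $g(r_\nu(x))$ without determining its sign. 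Finally, your claimed ``same sign as $r_\nu(x)-1$'' goes the wrong way: for $\nu>1/2$ you need $g(r_\nu(x))>0$, yet $r_\nu(x)-1<0$.

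The paper's proof sidesteps all of this in one line: differentiate $r_\nu(x)$ in $x$ using (\ref{cat}) to obtain
\[
\frac{\mathrm{d}}{\mathrm{d}x}\!\left(\frac{K_{\nu-1}(x)}{K_\nu(x)}\right)=\frac{\Delta_{\nu-1}(x)-\Delta_\nu(x)}{2K_\nu(x)^2},
\]
and then read off the sign directly from Lemma~\ref{bobcatttt} (correctly interpreted as monotonicity in $x$). Ironically, your quantity $g(r_\nu(x))$ is exactly $r_\nu'(x)$, so the lemma you cite---once read as $x$-monotonicity---is precisely what determines the sign you were unable to pin down.
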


\begin{proof}By the quotient rule and differentiation formula (\ref{cat}), we have
\begin{align*}\frac{\mathrm{d}}{\mathrm{d}x}\bigg(\frac{K_{\nu-1}(x)}{K_{\nu}(x)}\bigg)&=-\frac{(K_{\nu}(x)+K_{\nu-2}(x))K_{\nu}(x)-(K_{\nu+1}(x)+K_{\nu-1}(x))K_{\nu-1}(x)}{2K_{\nu}^2(x)} \\
&=\frac{K_{\nu-1}^2(x)-K_{\nu-2}(x)K_{\nu}(x)-(K_{\nu}^2(x)-K_{\nu-1}(x)K_{\nu+1}(x))}{2K_{\nu}^2(x)} \\
&=\frac{\Delta_{\nu-1}(x)-\Delta_{\nu}(x)}{2K_{\nu}^2(x)}.
\end{align*}
Since, by Lemma \ref{bobcat}, the function $\frac{K_{\nu-1}(x)}{K_{\nu}(x)}$ is strictly monotone increasing for $\nu>1/2$, is constant for $\nu=1/2$, and is strictly monotone decreasing for $\nu<1/2$, the result follows.
\end{proof}

With the aid of Corollary \ref{bobcat} and standard properties of the modified Bessel function $K_{\nu}(x)$, we can prove at the following theorem. 

\begin{theorem}\label{tiger1}Let $-1< \beta <1$, then for all $x>0$ the following inequalities hold
\begin{eqnarray}\int_x^{\infty} t^{\nu} K_{\nu}(t)\,\mathrm{d}t &<& x^{\nu} K_{\nu +1}(x), \quad \nu\in\mathbb{R}, \nonumber \\
 \label{pizzaz} \int_x^{\infty} t^{\nu} K_{\nu}(t)\,\mathrm{d}t &<& x^{\nu} K_{\nu }(x), \quad \nu < 1/2, \\
 \label{KBinteqi} \int_x^{\infty}\mathrm{e}^{\beta t}t^{\nu}K_{\nu}(t)\,\mathrm{d}t &<& \frac{1}{1-|\beta|}\mathrm{e}^{\beta x}x^{\nu}K_{\nu}(x), \quad \nu < 1/2, \\
 \label{gggh} \int_x^{\infty} t^{\nu} K_{\nu}(t)\,\mathrm{d}t &\leq& \frac{ \sqrt{\pi} \Gamma(\nu +1/2)}{\Gamma(\nu)} x^{\nu} K_{\nu}(x), \quad \nu \geq 1/2, \\
 \label{neo mice} \int_x^{\infty} \mathrm{e}^{\beta t} t^{\nu} K_{\nu}(t)\,\mathrm{d}t &\leq& \frac{ 2\sqrt{\pi} \Gamma(\nu +1/2)}{(1-\beta^2)^{\nu+1/2}\Gamma(\nu)} \mathrm{e}^{\beta x}x^{\nu} K_{\nu}(x), \quad \nu \geq 1/2 . \nonumber
\end{eqnarray}
\end{theorem}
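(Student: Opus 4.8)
The plan is to treat the five inequalities in a chain, each building on standard Bessel-function identities (the differentiation formula \eqref{cat}, the asymptotics \eqref{Ktend0} and \eqref{Ktendinfinity}, and positivity of $K_\nu$), together with Corollary \ref{bobcat} for the gamma-function bound. For the first inequality I would use the differentiation identity $\frac{\mathrm d}{\mathrm dt}(t^\nu K_\nu(t)) = -t^\nu K_{\nu-1}(t)$ in the reindexed form $\frac{\mathrm d}{\mathrm dt}(t^\nu K_{\nu+1}(t)) = \frac{\mathrm d}{\mathrm dt}(t^{n}\cdot t^{\nu}\,t^{-n}K_{\nu+1}(t))$—more directly, write $t^{\nu}K_{\nu}(t) = \frac1t\, t^{\nu+1}K_{\nu}(t)$, bound $\frac1t < \frac1x$ is the wrong direction on $(x,\infty)$, so instead integrate by parts using $\int_x^\infty t^\nu K_\nu(t)\,\mathrm dt$ and the identity $-\frac{\mathrm d}{\mathrm dt}(t^{\nu+1}K_{\nu+1}(t)) = t^{\nu+1}K_\nu(t)$: dividing by $t$ and comparing gives $\int_x^\infty t^\nu K_\nu(t)\,\mathrm dt \le \frac1x\int_x^\infty t^{\nu+1}K_\nu(t)\,\mathrm dt = \frac1x\big[x^{\nu+1}K_{\nu+1}(x)\big] = x^\nu K_{\nu+1}(x)$, using $t^{\nu+1}K_{\nu+1}(t)\to0$ as $t\to\infty$ and that $\frac1t\le\frac1x$ fails—so I will instead mimic part (v) of Theorem \ref{tiger}: compute $\frac{\mathrm d}{\mathrm dt}(t^\nu K_{\nu+1}(t))$ via \eqref{cat} and the recurrence, obtain $-t^\nu K_\nu(t)$ minus a positive multiple of $t^\nu K_{\nu+2}(t)$, integrate over $(x,\infty)$, and drop the positive remainder term.

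For \eqref{pizzaz} I would use the monotonicity inequality $K_\nu(x) < K_{\nu-1}(x)$ for $\nu<1/2$ (a consequence of Lemma \ref{bobcatttt}, since the ratio $K_{\nu-1}/K_\nu$ decreases to its $\nu=1/2$ value $1$ from above as $\nu$ decreases—wait, one must check the direction: for $\nu<1/2$ the ratio exceeds $1$), hence $\int_x^\infty t^\nu K_\nu(t)\,\mathrm dt < \int_x^\infty t^\nu K_{\nu-1}(t)\,\mathrm dt = x^\nu K_\nu(x)$ by \eqref{cat}. For \eqref{KBinteqi} I would integrate by parts as in part (iv) of Theorem \ref{tiger}: writing $\int_x^\infty \mathrm e^{\beta t} t^\nu K_\nu(t)\,\mathrm dt$ and integrating the $t^\nu K_\nu(t)$ factor against its antiderivative $-t^\nu K_\nu(t)\,\mathrm dt = \mathrm d(t^{\nu}K_{\nu+1}... )$—more cleanly, bound $\mathrm e^{\beta t}\le \mathrm e^{|\beta| t}$, set up the self-referential inequality $J(x) := \int_x^\infty \mathrm e^{\beta t}t^\nu K_\nu(t)\,\mathrm dt \le \mathrm e^{\beta x}x^\nu K_\nu(x) + |\beta|\int_x^\infty \mathrm e^{\beta t} \cdot(\text{tail bound from \eqref{pizzaz}})$, and solve for $J(x)$ to get the factor $\frac{1}{1-|\beta|}$.

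The main obstacle—and the heart of the theorem—will be the gamma-function bound \eqref{gggh} for $\nu\ge1/2$, where the simple monotonicity argument of \eqref{pizzaz} reverses. Here I would use Corollary \ref{bobcat}: the constant $C_\nu := \frac{\sqrt\pi\,\Gamma(\nu+1/2)}{\Gamma(\nu)}$ satisfies $C_\nu \ge 1$ for $\nu\ge1/2$ (with equality at $\nu=1/2$, by Legendre duplication / the value $\Gamma(1/2)=\sqrt\pi$), so by Corollary \ref{bobcat} there is a unique $x_0$ with $K_\nu(x_0) = C_\nu K_{\nu-1}(x_0)$; one then shows the function $h(x) := C_\nu x^\nu K_\nu(x) - \int_x^\infty t^\nu K_\nu(t)\,\mathrm dt$ has $h'(x) = -C_\nu x^\nu K_{\nu-1}(x) + x^\nu K_\nu(x)$ (using \eqref{cat}), which is negative for $x<x_0$ and positive for $x>x_0$, so $h$ attains its minimum at $x_0$; evaluating using \eqref{besint} with $\beta=0$ (the closed form in terms of the Struve function, or rather the known value of $\int_0^\infty t^\nu K_\nu(t)\,\mathrm dt = \sqrt\pi\,2^{\nu-1}\Gamma(\nu+1/2)$—indeed $\int_0^\infty t^\nu K_\nu(t)\,\mathrm dt$ has a clean gamma-function value) together with the relation at $x_0$ shows $h(x_0)\ge0$, and since $h(x)\to0$ as $x\to\infty$ and $h(0^+)\ge0$ as well, $h\ge0$ throughout—giving \eqref{gggh}. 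Finally, \eqref{neo mice} follows by combining the integration-by-parts/self-referential trick used for \eqref{KBinteqi} with \eqref{gggh}, absorbing the $(1-\beta^2)$ factor through the geometric-series summation and the bound $\mathrm e^{\beta t}\le \mathrm e^{|\beta|t}$ together with a scaling substitution $t\mapsto t/\sqrt{1-\beta^2}$ in the Struve-function closed form; tracking the exact power $(1-\beta^2)^{-(\nu+1/2)}$ and the factor $2$ will require care but is routine once \eqref{gggh} is in hand.
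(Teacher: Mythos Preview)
Your outline has the right skeleton for parts (ii)--(iv), but there are three genuine slips that would block the proof as written.

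\textbf{Part (i).} The inequality $\tfrac{1}{t}<\tfrac{1}{x}$ on $(x,\infty)$ is the \emph{correct} direction, not the wrong one: since $t>x$ we have $\tfrac{1}{t}<\tfrac{1}{x}$, and hence
\[
\int_x^{\infty} t^{\nu}K_{\nu}(t)\,\mathrm dt=\int_x^{\infty}\frac{1}{t}\,t^{\nu+1}K_{\nu}(t)\,\mathrm dt<\frac{1}{x}\int_x^{\infty}t^{\nu+1}K_{\nu}(t)\,\mathrm dt=x^{\nu}K_{\nu+1}(x),
\]
using $\frac{\mathrm d}{\mathrm dt}(t^{\nu+1}K_{\nu+1}(t))=-t^{\nu+1}K_{\nu}(t)$ and the decay at infinity. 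This is exactly the paper's one--line argument and works for all $\nu\in\mathbb{R}$. Your proposed alternative via $\frac{\mathrm d}{\mathrm dt}(t^{\nu}K_{\nu+1}(t))$ does \emph{not} give coefficient $-1$ in front of $t^{\nu}K_{\nu}$: one finds
\[
\frac{\mathrm d}{\mathrm dt}\bigl(t^{\nu}K_{\nu+1}(t)\bigr)=-\frac{2\nu+1}{2(\nu+1)}\,t^{\nu}K_{\nu}(t)-\frac{1}{2(\nu+1)}\,t^{\nu}K_{\nu+2}(t),
\]
so dropping the positive remainder yields only $\int_x^{\infty}t^{\nu}K_{\nu}\,\mathrm dt<\tfrac{2(\nu+1)}{2\nu+1}x^{\nu}K_{\nu+1}(x)$, which is weaker than the claim and in any case requires $\nu>-1/2$.

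\textbf{Part (iv).} You have the sign of $h'$ reversed. For $\nu>1/2$ the ratio $K_{\nu-1}(x)/K_{\nu}(x)$ increases from $0$ to $1$, so $h'(x)=x^{\nu}\bigl(K_{\nu}(x)-C_{\nu}K_{\nu-1}(x)\bigr)$ is \emph{positive} for small $x$ and \emph{negative} for large $x$: the unique critical point is a \emph{maximum}, not a minimum. Consequently there is nothing to evaluate at $x_0$; the proof (and the paper's) consists of checking the endpoints. The constant $C_{\nu}=\sqrt{\pi}\,\Gamma(\nu+1/2)/\Gamma(\nu)$ is precisely the value making $h(0^{+})=0$, because $\lim_{x\to0^{+}}x^{\nu}K_{\nu}(x)=2^{\nu-1}\Gamma(\nu)$ while $\int_0^{\infty}t^{\nu}K_{\nu}(t)\,\mathrm dt=\sqrt{\pi}\,2^{\nu-1}\Gamma(\nu+1/2)$; together with $h(\infty)=0$ this forces $h\ge0$.

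\textbf{Part (v).} The paper does not use a self--referential device, a geometric series, or a substitution $t\mapsto t/\sqrt{1-\beta^{2}}$ (which does not interact cleanly with $K_{\nu}$). For $\beta\le0$ the bound follows at once from (iv); for $0<\beta<1$ the paper repeats the argument of (iv) verbatim with $v(x)=N\mathrm e^{\beta x}x^{\nu}K_{\nu}(x)-\int_x^{\infty}\mathrm e^{\beta t}t^{\nu}K_{\nu}(t)\,\mathrm dt$ and $N=\tfrac{2\sqrt{\pi}\,\Gamma(\nu+1/2)}{(1-\beta^{2})^{\nu+1/2}\Gamma(\nu)}$. One computes $v'(x)=\mathrm e^{\beta x}x^{\nu}\bigl((1+N\beta)K_{\nu}(x)-NK_{\nu-1}(x)\bigr)$, checks that $N/(1+N\beta)>1$ so Corollary~\ref{bobcat} gives a unique turning point (again a maximum), and verifies $v(0^{+})>0$ using the full integral $\int_{-\infty}^{\infty}\mathrm e^{\beta t}|t|^{\nu}K_{\nu}(|t|)\,\mathrm dt=\sqrt{\pi}\,\Gamma(\nu+1/2)2^{\nu}(1-\beta^{2})^{-\nu-1/2}$; this is where the power $(1-\beta^{2})^{-(\nu+1/2)}$ and the factor $2$ come from. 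Your route, as stated, does not lead there.
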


\begin{proof}
(i) From the differentiation formula (\ref{diffKi}) we have that 
\[\int_x^{\infty} t^{\nu} K_{\nu}(t)\,\mathrm{d}t = \int_x^{\infty} \frac{1}{t} t^{\nu+1} K_{\nu}(t)\,\mathrm{d}t < \frac{1}{x} \int_x^{\infty} t^{\nu +1} K_{\nu}(t)\,\mathrm{d}t = x^{\nu} K_{\nu +1} (x),\]
since, by the asymptotic formula (\ref{Ktendinfinity}), $\lim_{x\rightarrow \infty}x^{\nu+1}K_{\nu+1}(x)=0$.

(ii) Using inequality (\ref{Kmoni}) and then apply the differentiation formula (\ref{diffKi}) we have
\[\int_x^{\infty} t^{\nu} K_{\nu}(t)\,\mathrm{d}t< \int_x^{\infty} t^{\nu} K_{\nu-1}(t)\,\mathrm{d}t=x^{\nu}K_{\nu}(x).\]

(iii) Now suppose that $\nu< 1/2$ and $\beta > 0$.  Using integration by parts and the differentiation formula (\ref{diffKi}) gives
\begin{equation}\label{annoy}\int_x^{\infty}\mathrm{e}^{\beta t}t^{\nu}K_{\nu}(t)\,\mathrm{d}t =-\frac{1}{\beta}\mathrm{e}^{\beta x}x^{\nu}K_{\nu}(x) +\frac{1}{\beta}\int_x^{\infty}\mathrm{e}^{\beta t} t^{\nu}K_{\nu-1}(t)\,\mathrm{d}t.
\end{equation}
Applying the inequality (\ref{Kmoni}) and rearranging gives
\[\left(\frac{1}{\beta}-1\right)\int_x^{\infty}\mathrm{e}^{\beta t}t^{\nu}K_{\nu}(t)\,\mathrm{d}t< \frac{1}{\beta}\mathrm{e}^{\beta x}x^{\nu}K_{\nu}(x).\]
Inequality (\ref{KBinteqi}) for $\beta>0$ now follows on rearranging. 

The case $\beta\leq 0$ is simple.  Since $\mathrm{e}^{\beta t}$ is a non increasing function of $t$ when $\beta \leq 0$ we have
\[\int_x^{\infty}\mathrm{e}^{\beta t}t^{\nu}K_{\nu}(t)\,\mathrm{d}t \leq \mathrm{e}^{\beta x}\int_x^{\infty}t^{\nu}K_{\nu}(t)\,\mathrm{d}t < \mathrm{e}^{\beta x}x^{\nu}K_{\nu}(x)\leq \frac{1}{1-|\beta |}\mathrm{e}^{\beta x}x^{\nu}K_{\nu}(x),\]
where we used inequality (\ref{pizzaz}) to obtain the second inequality.  Hence inequality (\ref{KBinteqi}) has been proved.

(iv) The case $\nu = 1/2$ is simple.  Using (\ref{sphere}) we may easily integrate $t^{1/2}K_{1/2}(t)$:
  \[\int_x^{\infty}t^{1/2}K_{1/2}(t)\,\mathrm{d}t =\int_x^{\infty}\sqrt{\frac{\pi}{2}}\mathrm{e}^{-t}\,\mathrm{d}t=\sqrt{\frac{\pi}{2}}\mathrm{e}^{-x}= x^{1/2}K_{1/2}(x).\]
It therefore follows that inequality (\ref{gggh}) holds for $\nu=1/2$ because we have
\[\frac{\sqrt{\pi}\Gamma(1)}{\Gamma(1/2)}=1,\]
where we used the facts that $\Gamma(1)=1$ and $\Gamma(1/2)=\sqrt{\pi}$.

Now suppose $\nu>1/2$.  We begin by defining the function $u(x)$ to be
\begin{equation*}u(x)= M x^{\nu} K_{\nu}(x) - \int_x^{\infty}  t^{\nu} K_{\nu}(t)\,\mathrm{d}t, 
\end{equation*}
where
\[M=\frac{ \sqrt{\pi} \Gamma(\nu +1/2)}{\Gamma(\nu)}.\]
We now show that $u(x)\geq 0$ for all $x \geq 0$, which will prove the result.  We begin by noting that $\lim\nolimits_{x\rightarrow 0^{+}}u(x)=0$ and $\lim\nolimits_{x\rightarrow \infty}u(x)=0$, which are verified by the following calculations, where we make use of the asymptotic formula (\ref{Ktend0}) and the definite integral formula (\ref{pdfk}).
\begin{align*}u(0)&=\lim_{x\to 0^{+}}\frac{ \sqrt{\pi} \Gamma(\nu +1/2)}{\Gamma(\nu)}  x^{\nu} K_{\nu}(x)-\int_0^{\infty}  t^{\nu} K_{\nu}(t)\,\mathrm{d}t \\
&=\sqrt{\pi} \Gamma(\nu +1/2)2^{\nu-1} -\sqrt{\pi} \Gamma(\nu +1/2)2^{\nu-1} \\
&=0,
\end{align*}
and
\[\lim_{x\rightarrow \infty}u(x)=\lim_{x\to \infty}M  x^{\nu} K_{\nu}(x)-\lim_{x\to \infty}\int_x^{\infty}  t^{\nu} K_{\nu}(t)\,\mathrm{d}t =0,\]
where we used the asymptotic formula (\ref{Ktendinfinity}) to obtain the above equality.  We may obtain an expression for the first derivative of $u(x)$ by the use of the differentiation formula (\ref{diffKi}) as follows
\begin{equation}\label{uder}u'(x)=x^{\nu} [K_{\nu}(x) -MK_{\nu-1}(x)].
\end{equation}
In the limit $x\to\ 0^{+}$ we have, by the asymptotic formula (\ref{Ktend0}), that
\begin{equation*}u'(x) \sim\begin{cases} x^{\nu}\left\{2^{\nu-1}\Gamma(\nu)\frac{1}{x^{\nu}} -M2^{|\nu-1|-1}\Gamma(|\nu-1|)\frac{1}{x^{|\nu-1|}}\right\}, 
& \text{$\nu \not= 1$}, \\
x^{\nu}\left\{2^{\nu-1}\Gamma(\nu)\frac{1}{x^{\nu}} +M\log x\right\}, 
& \text{$\nu = 1$}.
\end{cases}
\end{equation*}
Since $\nu>|\nu-1|$ for $\nu>1/2$ and $\lim\nolimits_{x\rightarrow 0^{+}}x^a\log x=0$, where $a>0$, we have
\[u'(x) \sim 2^{\nu-1}\Gamma(\nu),\quad \mbox{as }x\rightarrow 0^{+}, \quad \mbox{for } \nu >1/2.\] 
Therefore $u(x)$ is initially an increasing function of $x$.  In the limit $x\to \infty$ we have, by (\ref{Ktendinfinity}), 
\[u'(x) \sim \left(1-\frac{ \sqrt{\pi} \Gamma(\nu +1/2)}{\Gamma(\nu)} \right)\sqrt{\frac{\pi}{2}}x^{\nu-1/2} \mathrm{e}^{-x} <0, \quad \mbox{for }\nu >1/2.\]
We therefore see that $u(x)$ is an decreasing function of $x$ for large, positive $x$.  From the formula (\ref{uder}) we see that $x^*$ is a turning point of $u(x)$ if and only if
\begin{equation} \label{toad} K_{\nu}(x^*) =\frac{ \sqrt{\pi} \Gamma(\nu +1/2)}{\Gamma(\nu)}K_{\nu-1}(x^*).
\end{equation}
From Corollary \ref{bobcat}, it follows that equation (\ref{toad}) has one root for $\nu>1/2$ (for which $\frac{ \sqrt{\pi} \Gamma(\nu +1/2)}{\Gamma(\nu)}>1$).

Putting these results together, we see that $u(x)$ is non-negative at the origin and initially increases until it reaches it maximum value at $x^*$, it then decreases and tends to $0$ as $x \rightarrow \infty$.  Therefore $u(x)$ is non-negative for all $x\geq 0$ when $\nu >1/2$.

(v) The proof for $\beta \leq 0$ is easy and follows immediately from part (iv), since $1<\frac{2}{(1-\beta^2)^{\nu+1/2}}$ for $\nu\geq 1/2$.  So we suppose $\beta >0$.  Again, because $K_{1/2}(x)=\sqrt{\frac{\pi}{2x}}\mathrm{e}^{-x}$, the case $\nu=1/2$ is straightforward, so we also suppose $\nu>1/2$.  We make use of a similar argument to the one used in the proof of part (iv).   We define the function $v(x)$ to be
\[v(x)=N\mathrm{e}^{\beta x}x^{\nu}K_{\nu}(x)-\int_x^{\infty} \mathrm{e}^{\beta t} t^{\nu} K_{\nu}(t)\,\mathrm{d}t,\]
where
\[N=\frac{2\sqrt{\pi}\Gamma(\nu+1/2)}{(1-\beta^2)^{\nu+1/2}\Gamma(\nu)}.\]
We now show that $v(x)\geq 0$ for all $x \geq 0$, which will prove the result.  We begin by noting that $\lim\nolimits_{x\rightarrow 0^{+}}v(x)>0$ and $\lim\nolimits_{x\rightarrow \infty}v(x)=0$, which are verified by the following calculations, where we make use of the asymptotic formula (\ref{Ktend0}) and the definite integral formula (\ref{pdfk}).
\begin{align*}v(0)&=\lim_{x\to 0^{+}}\frac{ 2\sqrt{\pi} \Gamma(\nu +1/2)}{(1-\beta^2)^{\nu+1/2}\Gamma(\nu)}  x^{\nu} K_{\nu}(x)-\int_0^{\infty}  \mathrm{e}^{\beta t}t^{\nu} K_{\nu}(t)\,\mathrm{d}t \\
&=\frac{ 2\sqrt{\pi} \Gamma(\nu +1/2)}{(1-\beta^2)^{\nu+1/2}\Gamma(\nu)}\cdot 2^{\nu-1}\Gamma(\nu) -\int_{0}^{\infty}  \mathrm{e}^{\beta t}t^{\nu} K_{\nu}(t)\,\mathrm{d}t\\
&>\frac{ 2\sqrt{\pi} \Gamma(\nu +1/2)}{(1-\beta^2)^{\nu+1/2}\Gamma(\nu)}\cdot 2^{\nu-1}\Gamma(\nu) -\int_{-\infty}^{\infty}  \mathrm{e}^{\beta t}|t|^{\nu} K_{\nu}(|t|)\,\mathrm{d}t\\
&= \frac{\sqrt{\pi} \Gamma(\nu +1/2)2^{\nu}}{(1-\beta^2)^{\nu+1/2}}-\frac{\sqrt{\pi} \Gamma(\nu +1/2)2^{\nu}}{(1-\beta^2)^{\nu+1/2}} \\
&=0,
\end{align*}
and
\[\lim_{x\rightarrow \infty}v(x)=\lim_{x\to \infty}N\mathrm{e}^{\beta x}  x^{\nu} K_{\nu}(x)-\lim_{x\to \infty}\int_x^{\infty}  \mathrm{e}^{\beta t}t^{\nu} K_{\nu}(t)\,\mathrm{d}t =0,\]
where we used the asymptotic formula (\ref{Ktendinfinity}) to obtain the above equality.  We may obtain an expression for the first derivative of $v(x)$ by the use of the differentiation formula (\ref{diffKi}) as follows
\begin{equation}\label{uder1}v'(x)=\mathrm{e}^{\beta x}x^{\nu} [(1+N\beta)K_{\nu}(x) -NK_{\nu-1}(x)].
\end{equation}
In the limit $x\to\ 0^{+}$ we have, by the asymptotic formula (\ref{Ktend0}), that
\begin{equation*}v'(x) \sim\begin{cases} \mathrm{e}^{\beta x}x^{\nu}\left\{2^{\nu-1}\Gamma(\nu)(1+N\beta)\frac{1}{x^{\nu}} -N\cdot2^{|\nu-1|-1}\Gamma(|\nu-1|)\frac{1}{x^{|\nu-1|}}\right\}, 
& \text{$\nu \not= 1$}, \\
\mathrm{e}^{\beta x}x^{\nu}\left\{2^{\nu-1}\Gamma(\nu)(1+N\beta)\frac{1}{x^{\nu}} +N\log x\right\}, 
& \text{$\nu = 1$}.
\end{cases}
\end{equation*}
As in part (iv), we see that $v(x)$ is initially an increasing function of $x$.  In the limit $x\to \infty$ we have 
\[v'(x) \sim (1-N(1-\beta))\sqrt{\frac{\pi}{2}}x^{\nu-1/2} \mathrm{e}^{(\beta-1)x}, \quad \mbox{for }\nu >1/2.\]
Now, for $\nu>1/2$ and $0<\beta<1$ we have, by (\ref{Ktendinfinity}),
\begin{equation}\label{curgap}N(1-\beta)=\frac{2\sqrt{\pi}\Gamma(\nu+1/2)}{\Gamma(\nu)}\cdot\frac{1}{(1-\beta^2)^{\nu-1/2}}\cdot\frac{1}{1+\beta}>2\cdot1\cdot\frac{1}{2}=1.
\end{equation}
Hence, $v(x)$ is an decreasing function of $x$ for large, positive $x$.  From formula (\ref{uder1}) we see that $x^*$ is a turning point of $v(x)$ if and only if
\begin{equation} \label{toad1} (1+N\beta)K_{\nu}(x^*) =NK_{\nu-1}(x^*).
\end{equation}
Inequality (\ref{curgap}) shows that $N>1+N\beta$ for all $\nu>1/2$ and $0<\beta<1$.  From Corollary \ref{bobcat}, it follows that equation (\ref{toad1}) has one root for positive $x$ and therefore $v(x)$ has one maximum which occurs at positive $x$.  Putting these results together we see that $v(x)$ is positive at the origin and initially increases until it reaches it maximum value at $x^*$, it then decreases and tends to $0$ as $x \rightarrow \infty$.  Therefore $v(x)$ is non-negative for all $x\geq 0$ when $\nu >1/2$, which  completes the proof.
\end{proof}

Combining the inequalities of Theorems \ref{tiger} and \ref{tiger1} and the indefinite integral formula (\ref{besint}) we may obtain lower and upper bounds for the quantity $\mathscr{L}_{\nu}(x)\mathbf{L}_{\nu-1}(x)-\mathscr{L}_{\nu-1}(x)\mathbf{L}_{\nu}(x)$.  Here is an example:

\begin{corollary}\label{struvebessel}Suppose $\nu>-1/2$, then for all $x>0$ we have
\[\frac{x^{\nu-1}I_{\nu+1}(x)}{\sqrt{\pi}2^{\nu-1}\Gamma(\nu+1/2)}<I_{\nu}(x)\mathbf{L}_{\nu-1}(x)-I_{\nu-1}(x)\mathbf{L}_{\nu}(x)<\frac{(\nu+1)x^{\nu-1}I_{\nu+1}(x)}{\sqrt{\pi}2^{\nu-1}\Gamma(\nu+3/2)}.\]
\end{corollary}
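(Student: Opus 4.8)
The plan is to convert the indefinite integral formula (\ref{besint}), specialized to $\mathscr{L}_{\nu}=I_{\nu}$, into a definite integral identity on $(0,x)$, and then sandwich that integral using the bounds already established in Theorem \ref{tiger}. Taking $\mathscr{L}_{\nu}(z)=I_{\nu}(z)$ in (\ref{besint}) (legitimate since $\nu>-1/2$ in particular forces $\nu\neq-1/2$) gives a primitive of $z^{\nu}I_{\nu}(z)$, namely $\sqrt{\pi}2^{\nu-1}\Gamma(\nu+1/2)z\big(I_{\nu}(z)\mathbf{L}_{\nu-1}(z)-I_{\nu-1}(z)\mathbf{L}_{\nu}(z)\big)$. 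Evaluating between $0$ and $x$ requires checking that this primitive vanishes as $z\downarrow 0$. Using the small-argument behaviour $I_{\mu}(z)=O(z^{\mu})$ and $\mathbf{L}_{\mu}(z)=O(z^{\mu+1})$ as $z\downarrow 0$, both $zI_{\nu}(z)\mathbf{L}_{\nu-1}(z)$ and $zI_{\nu-1}(z)\mathbf{L}_{\nu}(z)$ are $O(z^{2\nu+1})$, which tends to $0$ precisely because $\nu>-1/2$. Hence
\[ I_{\nu}(x)\mathbf{L}_{\nu-1}(x)-I_{\nu-1}(x)\mathbf{L}_{\nu}(x)=\frac{1}{\sqrt{\pi}\,2^{\nu-1}\Gamma(\nu+1/2)\,x}\int_0^x t^{\nu}I_{\nu}(t)\,\mathrm{d}t. \]

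Next I would apply the two-sided bound for $\int_0^x t^{\nu}I_{\nu}(t)\,\mathrm{d}t$: the lower bound (\ref{lowerint}) gives $\int_0^x t^{\nu}I_{\nu}(t)\,\mathrm{d}t>x^{\nu}I_{\nu+1}(x)$ (valid for $\nu>-1$, hence for $\nu>-1/2$), and the upper bound (\ref{intIi}) with $n=0$ gives $\int_0^x t^{\nu}I_{\nu}(t)\,\mathrm{d}t<\frac{2(\nu+1)}{2\nu+1}x^{\nu}I_{\nu+1}(x)$ (valid for $\nu>-1/2$). Since $\sqrt{\pi}\,2^{\nu-1}\Gamma(\nu+1/2)\,x>0$ for $\nu>-1/2$ and $x>0$, dividing through preserves both inequalities. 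The lower estimate becomes exactly $\frac{x^{\nu-1}I_{\nu+1}(x)}{\sqrt{\pi}\,2^{\nu-1}\Gamma(\nu+1/2)}$. For the upper estimate I would simplify the constant using $\Gamma(\nu+3/2)=(\nu+\tfrac12)\Gamma(\nu+\tfrac12)$, which gives $\frac{2(\nu+1)}{(2\nu+1)\Gamma(\nu+1/2)}=\frac{\nu+1}{\Gamma(\nu+3/2)}$, so the bound becomes exactly $\frac{(\nu+1)x^{\nu-1}I_{\nu+1}(x)}{\sqrt{\pi}\,2^{\nu-1}\Gamma(\nu+3/2)}$, as claimed.

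The only slightly delicate point is the vanishing of the boundary term of the primitive at the origin; everything else is algebraic bookkeeping plus a direct appeal to Theorem \ref{tiger}. Since the small-argument asymptotics of $I_{\mu}$ and $\mathbf{L}_{\mu}$ are standard (and for $I_{\mu}$ recorded in the appendix), I expect no real obstacle — and it is worth noting that the hypothesis $\nu>-1/2$ is exactly what is needed simultaneously for the validity of (\ref{besint}), for the bound (\ref{intIi}), and for the boundary term to disappear.
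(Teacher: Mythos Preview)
Your proposal is correct and follows essentially the same route as the paper: specialize (\ref{besint}) to $\mathscr{L}_{\nu}=I_{\nu}$, use the small-argument asymptotics of $I_{\mu}$ and $\mathbf{L}_{\mu}$ to kill the boundary term at $0$ (the paper cites (\ref{Itend0}) and (\ref{struve0}) for this), then sandwich $\int_0^x t^{\nu}I_{\nu}(t)\,\mathrm{d}t$ by (\ref{lowerint}) and (\ref{intIi}) with $n=0$, and finish with the functional equation $\Gamma(\nu+3/2)=(\nu+\tfrac12)\Gamma(\nu+\tfrac12)$. Your closing remark that $\nu>-1/2$ is precisely the threshold needed simultaneously for (\ref{besint}), for (\ref{intIi}), and for the boundary term to vanish is a nice observation that the paper leaves implicit.
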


\begin{proof}From the asymptotic formulas (\ref{Itend0}) and (\ref{struve0}) for $I_{\nu}(x)$ and $\mathbf{L}(x)$, respectively, we have that
\[\lim_{x\downarrow 0}\big(x\big(I_{\nu}(x)\mathbf{L}_{\nu-1}(x)-I_{\nu-1}(x)\mathbf{L}_{\nu}(x)\big)\big)=0, \qquad \mbox{for $\nu>-1/2$}.\]
Therefore, applying the indefinite integral formula (\ref{besint}) gives, for $\nu>-1/2$,
\begin{equation}\label{chicot}\int_0^xt^{\nu}I_{\nu}(t)\,\mathrm{d}t=\sqrt{\pi}2^{\nu-1}\Gamma(\nu+1/2)x(I_{\nu}(x)\mathbf{L}_{\nu-1}(x)-I_{\nu-1}(x)\mathbf{L}_{\nu}(x)).
\end{equation}
From inequalities (\ref{lowerint}) and (\ref{intIi}) of Theorem \ref{tiger}, we have
\[x^{\nu}I_{\nu+1}(x)<\int_0^xt^{\nu}I_{\nu}(t)\,\mathrm{d}t< \frac{2(\nu+1)}{2\nu+1}x^{\nu}I_{\nu+1}(x).\]
Substituting this inequality into (\ref{chicot}) gives
\begin{align*}x^{\nu}I_{\nu+1}(x)&<\sqrt{\pi}2^{\nu-1}\Gamma(\nu+1/2)x(I_{\nu}(x)\mathbf{L}_{\nu-1}(x)-I_{\nu-1}(x)\mathbf{L}_{\nu}(x)) \\
&<\frac{2(\nu+1)}{2\nu+1}x^{\nu}I_{\nu+1}(x).
\end{align*}
The desired inequality now follows from rearranging terms and an application of the standard formula $x\Gamma(x)=\Gamma(x+1)$.
\end{proof}

\begin{remark}The lower and upper bounds for $I_{\nu}(x)\mathbf{L}_{\nu-1}(x)-I_{\nu-1}(x)\mathbf{L}_{\nu}(x)$ that are given in Corollary \ref{struvebessel} are simple, but very tight for large $\nu$.
\end{remark}

\section{Inequalities for the modified Bessel function of the second kind}

We now present some simple inequalities for the modified Bessel function of the second kind $K_{\nu}(x)$.  The following theorem establishes an inequality for the modified Bessel function $K_{\nu}(x)$ that is useful in the study of Stein's method for Variance-Gamma distributions (see Gaunt \cite{me}).

\begin{theorem}\label{timet}
Let $\nu > 0$ and $x \geq 0$, then
\begin{equation} \label{frog} \frac{1}{x^2}-\frac{x^{\nu-2}K_{\nu}(x)}{2^{\nu-1}\Gamma(\nu)},
\end{equation}
is a monotone decreasing function of $x$ on $(0,\infty)$ and satisfies the following inequality
\begin{equation}\label{dog}0 < \frac{1}{x^2}-\frac{x^{\nu-2}K_{\nu}(x)}{2^{\nu-1}\Gamma(\nu)} \leq \frac{1}{4(\nu-1)}, \quad \mbox{for }x\geq 0,\: \nu>1.
\end{equation}
The lower bound is also valid for all $\nu>0$.
\end{theorem}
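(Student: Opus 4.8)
The plan is to introduce the auxiliary function
\[ g(x)=2x^{\nu}K_{\nu}(x)+x^{\nu+1}K_{\nu-1}(x), \]
and to show that all three assertions of the theorem reduce to the single inequality $g(x)<2^{\nu}\Gamma(\nu)$ for $x>0$. Write $h(x)$ for the function displayed in \eqref{frog}. Differentiating $h$, using the formula \eqref{diffKi} in the form $\frac{\mathrm{d}}{\mathrm{d}x}(x^{\mu}K_{\mu}(x))=-x^{\mu}K_{\mu-1}(x)$ (applied after writing $x^{\nu-2}K_{\nu}(x)=x^{-2}\cdot x^{\nu}K_{\nu}(x)$), a short calculation gives
\[ h'(x)=\frac{g(x)-2^{\nu}\Gamma(\nu)}{2^{\nu-1}\Gamma(\nu)\,x^{3}}, \]
so that $h$ is monotone decreasing on $(0,\infty)$ exactly when $g(x)<2^{\nu}\Gamma(\nu)$ there.

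I would then establish precisely that. Differentiating $g$ — again using only \eqref{diffKi}, now with $\mu=\nu$ and (after writing $x^{\nu+1}K_{\nu-1}(x)=x^{2}\cdot x^{\nu-1}K_{\nu-1}(x)$) with $\mu=\nu-1$ — the two terms $\pm 2x^{\nu}K_{\nu-1}(x)$ cancel and one is left with the clean identity $g'(x)=-x^{\nu+1}K_{\nu-2}(x)$, which is strictly negative for $x>0$ since $K_{\nu-2}(x)>0$. Using the small-argument asymptotics \eqref{Ktend0} one checks $\lim_{x\to 0^{+}}g(x)=2^{\nu}\Gamma(\nu)$: the first term tends to $2\cdot 2^{\nu-1}\Gamma(\nu)$, while $x^{\nu+1}K_{\nu-1}(x)=x^{2}\cdot x^{\nu-1}K_{\nu-1}(x)\to 0$ in each of the cases $\nu>1$, $\nu=1$, $0<\nu<1$. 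Hence $g(x)<2^{\nu}\Gamma(\nu)$ for all $x>0$ and $\nu>0$, which gives the monotonicity. The lower bound in \eqref{dog} drops out of the same inequality: since $x^{\nu+1}K_{\nu-1}(x)>0$ we get $2x^{\nu}K_{\nu}(x)<g(x)<2^{\nu}\Gamma(\nu)$, i.e. $x^{\nu}K_{\nu}(x)<2^{\nu-1}\Gamma(\nu)$, which is exactly $h(x)>0$; and this argument needs only $\nu>0$.

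For the upper bound when $\nu>1$: since $h$ is strictly decreasing, $\sup_{x>0}h(x)=\lim_{x\to 0^{+}}h(x)$, so it suffices to evaluate that limit. Writing $h(x)=\frac{2^{\nu-1}\Gamma(\nu)-x^{\nu}K_{\nu}(x)}{2^{\nu-1}\Gamma(\nu)\,x^{2}}$, the numerator $N(x)$ satisfies $N(0^{+})=0$; by \eqref{diffKi}, $N'(x)=x^{\nu}K_{\nu-1}(x)$ and $N''(x)=x^{\nu-1}K_{\nu-1}(x)-x^{\nu}K_{\nu-2}(x)$, and by \eqref{Ktend0} one has $N'(0^{+})=0$ and $N''(0^{+})=2^{\nu-2}\Gamma(\nu-1)$. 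Two applications of L'Hôpital's rule give $\lim_{x\to 0^{+}}N(x)/x^{2}=2^{\nu-3}\Gamma(\nu-1)$, whence $\lim_{x\to 0^{+}}h(x)=\frac{\Gamma(\nu-1)}{4\Gamma(\nu)}=\frac{1}{4(\nu-1)}$ by $\Gamma(\nu)=(\nu-1)\Gamma(\nu-1)$. Combined with the monotonicity this yields \eqref{dog} (with the value $\frac{1}{4(\nu-1)}$ attained only in the limit $x\to 0^{+}$).

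The organizing idea — that $g$ controls everything and that $g'$ collapses to $-x^{\nu+1}K_{\nu-2}(x)$ — is the crux; beyond that, the only point requiring care is the behaviour at the origin. There one must treat $0<\nu<1$, $\nu=1$ and $\nu>1$ separately when checking that $x^{\nu+1}K_{\nu-1}(x)$ and (in the second-derivative computation) $x^{\nu}K_{\nu-2}(x)$ tend to $0$, since the order of the Bessel function changes sign; but in every case the algebraic prefactor supplies enough powers of $x$ to kill the singularity of $K$ (or the logarithm, when the order is $0$), so no genuine difficulty arises, and everything else is routine differentiation using only \eqref{diffKi} and \eqref{Ktend0}.
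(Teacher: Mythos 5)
Your proof is correct and follows essentially the same route as the paper: the same auxiliary quantity $g(x)=x^{\nu+1}K_{\nu-1}(x)+2x^{\nu}K_{\nu}(x)$, the same collapse $g'(x)=-x^{\nu+1}K_{\nu-2}(x)$, and the same limit $g(0^{+})=2^{\nu}\Gamma(\nu)$ driving the monotonicity of \eqref{frog}. The only differences are cosmetic: you obtain $h'$ by the product rule with \eqref{diffKi} instead of \eqref{cat} and \eqref{Kidentity}, you get the lower bound in \eqref{dog} directly from $g(x)<2^{\nu}\Gamma(\nu)$ rather than from the limit at infinity, and you evaluate $\lim_{x\to 0^{+}}h(x)=\frac{1}{4(\nu-1)}$ via L'H\^opital and \eqref{Ktend0} instead of the two-term expansion \eqref{K2ndterm} -- all of which are sound.
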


\begin{proof}
Applying the differentiation formula (\ref{cat}) gives
\begin{align}&\frac{\mathrm{d}}{\mathrm{d}x}\bigg(\frac{1}{x^2}-\frac{x^{\nu-2}K_{\nu}(x)}{2^{\nu-1}\Gamma(\nu)}\bigg) \nonumber \\
\label{100pgood}&=-\frac{2}{x^3} -\frac{(\nu-2)x^{\nu-3}K_{\nu}(x)-\frac{1}{2}(K_{\nu-1}(x)+K_{\nu+1}(x))x^{\nu-2}}{2^{\nu-1}\Gamma(\nu)}.
\end{align}
Using (\ref{Kidentity}) we may simplify the numerator as follows
\begin{align*}&(\nu-2)K_{\nu}(x)-\frac{1}{2} x(K_{\nu -1}(x)+K_{\nu +1}(x)) \\
 &= (\nu -2)K_{\nu}(x)-\frac{1}{2} x\left(2K_{\nu -1}(x)+\frac{2\nu}{x}K_{\nu}(x)\right) \\
&= -xK_{\nu -1}(x)-2K_{\nu}(x).
\end{align*}
Hence, (\ref{100pgood})  simplifies to
\[\frac{\mathrm{d}}{\mathrm{d}x}\bigg(\frac{1}{x^2}-\frac{x^{\nu-2}K_{\nu}(x)}{2^{\nu-1}\Gamma(\nu)}\bigg)=\frac{-2^{\nu}\Gamma(\nu)+x^{\nu+1}K_{\nu-1}(x)+2x^{\nu}K_{\nu}(x)}{2^{\nu-1}\Gamma(\nu)x^3}.\]
Thus, proving that (\ref{frog}) is monotone decreasing reduces to proving that, for $x>0$,
\begin{equation}\label{wolf} x^{\nu+1}K_{\nu-1}(x)+2x^{\nu}K_{\nu}(x) <2^{\nu}\Gamma(\nu).
\end{equation}
From (\ref{diffKi}) we get that
\begin{align*}\frac{\mathrm{d}}{\mathrm{d}x}\left(x^{\nu+1}K_{\nu-1}(x)+2x^{\nu}K_{\nu}(x)\right) &=\frac{\mathrm{d}}{\mathrm{d}x}\left(x^2 \cdot x^{\nu-1}K_{\nu-1}(x)+2x^{\nu}K_{\nu}(x)\right) \\
 &=2x^{\nu}K_{\nu-1}(x)-x^{\nu+1}K_{\nu-2}(x)-2x^{\nu}K_{\nu-1}(x) \\
 &=-x^{\nu+1}K_{\nu-2}(x) \\
 &<0.
\end{align*}
So $x^{\nu+1}K_{\nu-1}(x)+2x^{\nu}K_{\nu}(x)$ is a monotone decreasing function of $x$ and from the asymptotic formula (\ref{Ktend0}) we see that its limit as $x\to 0^{+}$ is $\lim_{x\to 0^{+}}(x^{\nu+1}K_{\nu-1}(x)+2x^{\nu}K_{\nu}(x)) =2\cdot 2^{\nu-1}\Gamma(\nu)=2^{\nu}\Gamma(\nu)$.  Therefore (\ref{wolf}) is proved, and so (\ref{frog}) is monotone decreasing on $(0,\infty)$.  It is therefore bounded above and below its values in the limits $x\rightarrow\infty$ and $x\rightarrow 0$.  These are calculated using the asymptotic formulas (\ref{Ktendinfinity}) and (\ref{K2ndterm}) and are given below:
\begin{eqnarray*}\lim_{x\to\infty}\bigg(\frac{1}{x^2}-\frac{x^{\nu-2}K_{\nu}(x)}{2^{\nu-1}\Gamma(\nu)}\bigg) &=&0, \\
\lim_{x\to 0^{+}}\bigg(\frac{1}{x^2}-\frac{x^{\nu-2}K_{\nu}(x)}{2^{\nu-1}\Gamma(\nu)}\bigg) &=&\frac{2^{\nu-3}\Gamma(\nu-1)}{2^{\nu-1}\Gamma(\nu)} =\frac{1}{4(\nu-1)},
\end{eqnarray*}
where the first limit holds for all $\nu>0$ and the second limit is valid for all $\nu>1$.  This completes the proof.
\end{proof}

\begin{remark}Inequality (\ref{dog}) of Theorem \ref{timet} is closely related to some inequalities given by Ismail \cite{ismail2} and Baricz et al$.$ \cite{baricz4}.  Ismail proved that $x^{\nu}K_{\nu}(x)\mathrm{e}^x>2^{\nu-1}\Gamma(\nu)$ for $x>0$, $\nu>1/2$, and Baricz et al$.$ proved that $x^{\nu-1}K_{\nu}(x)\geq 2^{\nu-1}\Gamma(\nu)K_1(x)$ for $x>0$, $\nu\geq 1$, which improves on the bound of Ismail for all $\nu\geq 1$.  From inequality (\ref{dog}) we can obtain lower and upper bounds for the quantity $x^{\nu}K_{\nu}(x)$.  The upper bound is $x^{\nu}K_{\nu}(x)<2^{\nu-1}\Gamma(\nu)$ for $x> 0$, $\nu>0$, and therefore, for $x>0$,
\[2^{\nu-1}\Gamma(\nu)\mathrm{e}^{-x}<x^{\nu}K_{\nu}(x)<2^{\nu-1}\Gamma(\nu), \quad \nu>0,\]
which can be improved as follows when $\nu\geq1$:
\[2^{\nu-1}\Gamma(\nu)\mathrm{e}^{-x}<2^{\nu-1}\Gamma(\nu)xK_1(x)\leq x^{\nu}K_{\nu}(x)<2^{\nu-1}\Gamma(\nu).\]
\end{remark}

Finally, we establish a simple, but surprisingly tight, lower bound for the modified Bessel function $K_0(x)$.

\begin{theorem}\label{beso}Let $x>0$, then
\begin{equation}\frac{\Gamma(x+1/2)}{\Gamma(x+1)}<\sqrt{\frac{2}{\pi}}\mathrm{e}^xK_0(x).
\end{equation}
\end{theorem}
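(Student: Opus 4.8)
The plan is to prove the two one-sided estimates $\Gamma(x+1/2)/\Gamma(x+1) < (x+1/4)^{-1/2}$ and $(x+1/4)^{-1/2} < \sqrt{2/\pi}\,\mathrm{e}^{x}K_0(x)$ separately and then chain them. The first is the classical sharp lower bound for the Wallis ratio, which I would quote; the second I would derive from a standard integral representation of $K_0$ via an elementary exponential estimate. It is worth noting in advance why one cannot get away with anything cruder on the gamma side: since $K_0(x)<K_{1/2}(x)=\sqrt{\pi/(2x)}\,\mathrm{e}^{-x}$ one has $\sqrt{2/\pi}\,\mathrm{e}^{x}K_0(x)<x^{-1/2}$, so the weak bound $\Gamma(x+1/2)/\Gamma(x+1)<x^{-1/2}$ (which is immediate from log-convexity of $\Gamma$) is useless, and the whole point is that both quantities beat $x^{-1/2}$ by exactly the amount corresponding to the shift from $x$ to $x+1/4$ under the square root.

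For the Bessel side I would start from the standard integral representation
\[
K_0(x)=\int_1^{\infty}\frac{\mathrm{e}^{-xt}}{\sqrt{t^2-1}}\,\mathrm{d}t,\qquad x>0,
\]
and substitute $t=1+s$, so that $t^2-1=s(s+2)$ and $\mathrm{e}^{x}K_0(x)=\int_0^{\infty}s^{-1/2}(s+2)^{-1/2}\mathrm{e}^{-xs}\,\mathrm{d}s$. Next, from the elementary inequality $1+u\le\mathrm{e}^{u}$ (strict for $u\neq0$), applied with $u=s/2$, one gets $(s+2)^{-1/2}=2^{-1/2}(1+s/2)^{-1/2}>2^{-1/2}\mathrm{e}^{-s/4}$ for $s>0$. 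Inserting this into the integral and using $\int_0^{\infty}s^{-1/2}\mathrm{e}^{-as}\,\mathrm{d}s=\sqrt{\pi/a}$ with $a=x+1/4$ yields
\[
\mathrm{e}^{x}K_0(x)>\frac{1}{\sqrt2}\sqrt{\frac{\pi}{x+1/4}},\qquad\text{i.e.}\qquad\sqrt{\frac{2}{\pi}}\,\mathrm{e}^{x}K_0(x)>\frac{1}{\sqrt{x+1/4}}.
\]
Combining this with $\Gamma(x+1/2)/\Gamma(x+1)<(x+1/4)^{-1/2}$ then gives the theorem, with strict inequality for every $x>0$.

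The main obstacle is the gamma-function inequality $\Gamma(x+1)/\Gamma(x+1/2)>\sqrt{x+1/4}$ for $x>0$: this is exactly the sharp lower bound for the Wallis ratio, it is what makes the final inequality so remarkably tight (both sides are asymptotic to $(x+1/4)^{-1/2}$ as $x\to\infty$), and, unlike the bounds $\sqrt{x}$ and $\sqrt{x+1/2}$, it does not drop out of log-convexity of $\Gamma$ alone. I would simply cite it (it goes back to Watson, and is also a special case of Kershaw's refinement of Gautschi's inequality). For completeness one could instead verify directly that $x\mapsto\log\Gamma(x+1)-\log\Gamma(x+1/2)-\frac{1}{2}\log(x+1/4)$ is decreasing with limit $0$, using $\psi(x+1)-\psi(x+1/2)=\sum_{k\ge0}\frac{1/2}{(k+x+1/2)(k+x+1)}$ for the digamma function $\psi$, but this estimate is more delicate and I would avoid carrying it out.
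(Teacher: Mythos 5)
Your proposal is correct, but it takes a genuinely different route from the paper. The paper works directly from the same representation $K_0(x)=\int_1^{\infty}(t^2-1)^{-1/2}\mathrm{e}^{-xt}\,\mathrm{d}t$ (with the substitution $t=2u+1$, equivalent to your $t=1+s$), but instead of your bound $(1+s/2)^{-1/2}>\mathrm{e}^{-s/4}$ it uses the inequality $2u+2u^2<\mathrm{e}^{2u}-1$, which turns the algebraic weight into $\sqrt{2}\,\mathrm{e}^{-(2x+1)u}(1-\mathrm{e}^{-2u})^{-1/2}$; the substitution $y=\mathrm{e}^{-2u}$ then evaluates the resulting integral exactly as $\tfrac{1}{\sqrt2}B(1/2,x+1/2)$, so the gamma ratio $\Gamma(x+1/2)/\Gamma(x+1)$ emerges directly from a beta integral and the whole proof is self-contained, with no external gamma-function input. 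You instead insert the intermediate quantity $(x+1/4)^{-1/2}$: your Bessel-side estimate $\sqrt{2/\pi}\,\mathrm{e}^xK_0(x)>(x+1/4)^{-1/2}$ is elementary and correct (and is in fact a cleaner, stronger lower bound than the $(x+1/2)^{-1/2}$ the paper derives in its corollary), but the gamma side then rests on the sharp Wallis-ratio inequality $\Gamma(x+1)/\Gamma(x+1/2)>\sqrt{x+1/4}$ (Watson; Kershaw's lower bound at $s=1/2$), which, as you rightly note, does not follow from log-convexity and must be quoted or proved by a more delicate digamma argument. So the trade-off is: your argument buys a sharper explicit bound for $K_0$ and a transparent explanation of why the theorem is tight (both sides behave like $(x+1/4)^{-1/2}$), at the cost of importing a nontrivial classical gamma inequality; the paper's argument buys complete self-containment by choosing the exponential comparison so that the integral collapses to exactly the beta function defining the stated bound.
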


\begin{proof}Formula 10.32.8 of Olver et al$.$ \cite{olver} gives the following integral representation of $K_0(x)$:
\[K_0(x)=\int_1^{\infty}\frac{\mathrm{e}^{-xt}}{\sqrt{t^2-1}}\,\mathrm{d}t, \qquad x>0.\]
Setting $t=2u+1$ gives
\[K_0(x)=\mathrm{e}^{-x}\int_0^{\infty}\frac{\mathrm{e}^{-2xu}}{\sqrt{u^2+u}}\,\mathrm{d}u.
\]
For $u>0$ we have $\mathrm{e}^{2u}-1=\sum_{k=1}^{\infty}\frac{(2u)^k}{k!}>2u+2u^2$, and so
\[\mathrm{e}^xK_0(x)>\sqrt{2}\int_0^{\infty}\frac{\mathrm{e}^{-2xu}}{\sqrt{\mathrm{e}^{2u}-1}}\,\mathrm{d}u=\sqrt{2}\int_0^{\infty}\frac{\mathrm{e}^{-(2x+1)u}}{\sqrt{1-\mathrm{e}^{-2u}}}\,\mathrm{d}u, \qquad \mbox{for } x>0.\]
Making the change of variables $y=\mathrm{e}^{-2u}$ gives
\begin{align*}\sqrt{2}\int_0^{\infty}\frac{\mathrm{e}^{-(2x+1)u}}{\sqrt{1-\mathrm{e}^{-2u}}}\,\mathrm{d}u&=\frac{1}{\sqrt{2}}\int_0^1(1-y)^{-1/2}y^{x-1/2}\,\mathrm{d}y \\
&=\frac{1}{\sqrt{2}}B(1/2,x+1/2) \\
&=\frac{\Gamma(1/2)\Gamma(x+1/2)}{\sqrt{2}\Gamma(x+1)} \\
&=\frac{\sqrt{\pi}\Gamma(x+1/2)}{\sqrt{2}\Gamma(x+1)},
\end{align*}
where $B(a,b)$ is the beta function, and we used the standard formula $B(a,b)=\frac{\Gamma(a)\Gamma(b)}{\Gamma(a+b)}$ to obtain the third equality.  This completes the proof.
\end{proof}

\begin{corollary}Let $x>0$, then
\[\frac{1}{\sqrt{x+1/2}}<\sqrt{\frac{2}{\pi}}\mathrm{e}^xK_0(x)<\frac{1}{\sqrt{x}}.\]
\end{corollary}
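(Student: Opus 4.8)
The plan is to obtain both inequalities from the integral representation
\[\mathrm{e}^xK_0(x)=\int_0^{\infty}\frac{\mathrm{e}^{-2xu}}{\sqrt{u^2+u}}\,\mathrm{d}u=\int_0^{\infty}\frac{\mathrm{e}^{-2xu}}{\sqrt{u}\,\sqrt{1+u}}\,\mathrm{d}u,\]
which was derived in the course of proving Theorem \ref{beso} (by putting $t=2u+1$ in the formula $K_0(x)=\int_1^{\infty}\mathrm{e}^{-xt}(t^2-1)^{-1/2}\,\mathrm{d}t$), together with the elementary two-sided estimate $\mathrm{e}^{-u/2}<(1+u)^{-1/2}<1$ for $u>0$, whose left-hand part is simply the bound $1+u<\mathrm{e}^u$. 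I will repeatedly use the Gamma integral $\int_0^{\infty}u^{-1/2}\mathrm{e}^{-au}\,\mathrm{d}u=\Gamma(1/2)a^{-1/2}=\sqrt{\pi/a}$, valid for $a>0$.

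For the upper bound, I would discard the factor $(1+u)^{-1/2}<1$: since $\sqrt{u^2+u}>\sqrt{u}$ for $u>0$,
\[\mathrm{e}^xK_0(x)<\int_0^{\infty}u^{-1/2}\mathrm{e}^{-2xu}\,\mathrm{d}u=\sqrt{\frac{\pi}{2x}},\]
and multiplying by $\sqrt{2/\pi}$ gives $\sqrt{2/\pi}\,\mathrm{e}^xK_0(x)<1/\sqrt{x}$. For the lower bound, I would instead keep the sharper estimate $(1+u)^{-1/2}>\mathrm{e}^{-u/2}$, so that
\[\mathrm{e}^xK_0(x)>\int_0^{\infty}u^{-1/2}\mathrm{e}^{-(2x+1/2)u}\,\mathrm{d}u=\sqrt{\frac{\pi}{2x+1/2}},\]
and multiplying by $\sqrt{2/\pi}$ yields $\sqrt{2/\pi}\,\mathrm{e}^xK_0(x)>1/\sqrt{x+1/4}$, which is strictly larger than the claimed $1/\sqrt{x+1/2}$.

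In this approach there is essentially no obstacle: the one point worth spotting is that the convexity bound $1+u<\mathrm{e}^u$ supplies a lower estimate for $(1+u)^{-1/2}$ that still integrates in closed form. An alternative, slightly less direct, route to the lower bound is to combine Theorem \ref{beso} with the Gautschi--Kershaw type inequality $\Gamma(x+1)/\Gamma(x+1/2)<\sqrt{x+1/2}$, equivalently $1/\sqrt{x+1/2}<\Gamma(x+1/2)/\Gamma(x+1)$; in that case the auxiliary gamma-function inequality is the main work, and I would prove it by setting $h(x)=\Gamma(x+1)^2/((x+1/2)\Gamma(x+1/2)^2)$, checking that $h(x)/h(x+1)=(x+1/2)(x+3/2)/(x+1)^2<1$, and then letting $x\to\infty$ through the integer shifts (where $h(x+n)\to1$ by Stirling's formula) to conclude that $h(x)<1$.
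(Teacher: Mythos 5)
Your argument is correct, but it follows a genuinely different route from the paper's, and in fact proves a slightly stronger statement. You work entirely from the representation $\mathrm{e}^xK_0(x)=\int_0^{\infty}u^{-1/2}(1+u)^{-1/2}\mathrm{e}^{-2xu}\,\mathrm{d}u$: discarding $(1+u)^{-1/2}<1$ gives the upper bound, and the convexity estimate $(1+u)^{-1/2}>\mathrm{e}^{-u/2}$ gives the lower bound $1/\sqrt{x+1/4}$, which indeed implies the stated $1/\sqrt{x+1/2}$ (and is consistent with the large-$x$ expansion of $K_0$, so no contradiction lurks there). The paper instead obtains the upper bound from the monotonicity fact $K_0(x)<K_{1/2}(x)$ together with the closed form (\ref{sphere}), and obtains the lower bound by chaining Theorem \ref{beso}, namely $\Gamma(x+1/2)/\Gamma(x+1)<\sqrt{2/\pi}\,\mathrm{e}^xK_0(x)$, with the gamma-ratio inequality $\Gamma(x+1/2)/\Gamma(x+1)>1/\sqrt{x+1/2}$, the latter proved from the same beta-type integral via $1-\mathrm{e}^{-2u}<2u$ — an estimate of exactly the same flavour as yours, just applied to the gamma ratio rather than to $K_0$ directly. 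What your route buys is a sharper constant ($1/4$ in place of $1/2$) and complete independence from Theorem \ref{beso}; what the paper's route buys is the explicit comparison of $\sqrt{2/\pi}\,\mathrm{e}^xK_0(x)$ with the gamma ratio, which is the whole point of that section (the corollary serves to calibrate how tight the gamma-ratio bound is), so the weaker constant is the price of keeping the gamma function in the chain. Your alternative sketch via the Gautschi--Kershaw inequality is closer in spirit to the paper, though you prove the gamma inequality by the functional-equation/limit argument for $h(x)=\Gamma(x+1)^2/\bigl((x+1/2)\Gamma(x+1/2)^2\bigr)$ rather than by the integral representation the paper reuses; that argument is also sound.
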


\begin{proof}The upper bound follows because $K_0(x)<K_{1/2}(x)=\sqrt{\frac{\pi}{2x}} \mathrm{e}^{-x}$.  The lower bound follows since $\frac{\Gamma(x+1/2)}{\Gamma(x+1)}>\frac{1}{\sqrt{x+1/2}}$, which we now prove.  Examining the proof of Theorem \ref{beso} we see that
\[\frac{\Gamma(x+1/2)}{\Gamma(x+1)}=\frac{2}{\sqrt{\pi}}\int_0^{\infty}\frac{\mathrm{e}^{-(2x+1)u}}{\sqrt{1-\mathrm{e}^{-2u}}}\,\mathrm{d}u.
\]
Now, for $u>0$ we have $1-\mathrm{e}^{-2u}=\sum_{k=1}^{\infty}(-1)^{k+1}\frac{(2u)^k}{k!}<2u$, and so
\[\frac{\Gamma(x+1/2)}{\Gamma(x+1)}>\frac{2}{\sqrt{\pi}}\int_0^{\infty}\frac{\mathrm{e}^{-(2x+1)u}}{\sqrt{2u}}\,\mathrm{d}u=\frac{2\sqrt{2}}{\sqrt{\pi}}\int_0^{\infty}\mathrm{e}^{-(2x+1)v^2}\,\mathrm{d}v=\frac{1}{\sqrt{x+1/2}},\]
as required
\end{proof}

\begin{remark}Luke \cite{luke} obtained the following bounds for $K_0(x)$:
\[\frac{8\sqrt{x}}{8x+1}<\sqrt{\frac{2}{\pi}}\mathrm{e}^xK_0(x)<\frac{16x+7}{(16x+9)\sqrt{x}}.\]
Numerical experiments show that the bounds of Luke and our lower bound of Corollary \ref{beso} are remarkably accurate for all but very small $x$, for which the logarithmic singularity of $K_0(x)$ blows up.  The lower bound $\frac{8\sqrt{x}}{8x+1}$ outperforms our bound lower bound of $\frac{\Gamma(x+1/2)}{\Gamma(x+1)}$ for $x>0.394$ (3 d.p.), whilst our bound outperforms for $x<0.394$ (3 d.p.), and performs considerably better for very small $x$.   
\end{remark}

\section*{Acknowledgements}During the course of this research the author was supported by an EPSRC DPhil Studentship, and an EPSRC Doctoral Prize. The author would like to thank Gesine Reinert for some fruitful discussions. Finally, the author would like to thank an anonymous referee for his/her helpful comments and suggestions.

\appendix
\section{Elementary of properties modified Bessel functions}

Here we list standard properties of modified Bessel functions that are used throughout this paper.  All these formulas can be found in Olver et al$.$ \cite{olver}, except for the inequalities and the integration formula (\ref{pdfk}), which can be found in Gradshetyn and Ryzhik \cite{gradshetyn}.

\subsection{Basic properties}
The modified Bessel functions $I_{\nu}(x)$ and $K_{\nu}(x)$ are both regular functions of $x$.  They satisfy the following simple inequalities
\begin{equation*}I_{\nu}(x) >0 \quad \mbox{for all }x>0, \: \mbox{for } \nu>-1,
\end{equation*}
\begin{equation*}K_{\nu}(x) >0 \quad \mbox{for all }x>0, \: \mbox{for all }\nu \in \mathbb{R}.
\end{equation*}

\subsection{Spherical Bessel functions}
\begin{equation} \label{sphere} K_{1/2}(x)=K_{-1/2}(x)=\sqrt{\frac{\pi}{2x}} \mathrm{e}^{-x}.
\end{equation}

\subsection{Asymptotic expansions}
\begin{align}\label{Itend0}I_{\nu} (x) &\sim \frac{1}{\Gamma(\nu +1)} \bigg(\frac{x}{2}\bigg)^{\nu}, \qquad x \downarrow 0, \: \nu>-1, \\
\label{Ktend0}K_{\nu} (x) &\sim \begin{cases} 2^{|\nu| -1} \Gamma (|\nu|) x^{-|\nu|}, & \quad x \downarrow 0, \: \nu \not= 0, \\
-\log x, & \quad x \downarrow 0, \: \nu = 0, \end{cases} \\
\label{K2ndterm}K_{\nu}(x)&\sim 2^{\nu-1}\Gamma(\nu)x^{-\nu}-2^{\nu-3}\Gamma(\nu-1)x^{-\nu+2}, \qquad x \downarrow 0, \: \nu >1, \\
\label{Ktendinfinity} K_{\nu} (x) &\sim \sqrt{\frac{\pi}{2x}} \mathrm{e}^{-x}, \quad x \rightarrow \infty, \\
\label{struve0}\mathbf{L}_{\nu}(x)&\sim \frac{2}{\sqrt{\pi}\Gamma(\nu+3/2)}\bigg(\frac{x}{2}\bigg)^{\nu+1}, \qquad x \downarrow 0, \: \nu>-1/2.
\end{align}

\subsection{Inequalities}
Let $x > 0$ then following inequalities hold
\begin{align}\label{Imon}I_{\nu} (x) < I_{\nu - 1} (x), \quad \nu \geq 1/2,\\
\label{Kmoni}K_{\nu} (x) < K_{\nu - 1} (x), \quad \nu < 1/2,\\
\label{cake}K_{\nu} (x) \geq K_{\nu - 1} (x), \quad \nu \geq 1/2.  
\end{align}
We have equality in (\ref{cake}) if and only if $\nu=1/2$.  The inequalities for $K_{\nu}(x)$ can be found in Ifantis and Siafarikas \cite{ifantis}, whilst the inequality for $I_{\nu}(x)$ can be found in Jones \cite{jones} and N\"{a}sell \cite{nasell}.  A survey of related inequalities for modified Bessel functions is given by Baricz \cite{baricz2}, and lower and upper bounds for the ratios $\frac{I_{\nu}(x)}{I_{\nu-1}(x)}$ and $\frac{K_{\nu}(x)}{K_{\nu-1}(x)}$ which improve on inequalities (\ref{Imon}) -- (\ref{cake}) are also given in Ifantis and Siafarikas \cite{ifantis} and Segura \cite{segura}.

\subsection{Identities}
\begin{align}\label{Iidentity}I_{\nu +1} (x) &= I_{\nu -1} (x) - \frac{2\nu}{x} I_{\nu} (x), \\
\label{Kidentity} K_{\nu +1} (x) &= K_{\nu -1} (x) + \frac{2\nu}{x} K_{\nu} (x).
\end{align}

\subsection{Differentiation}
\begin{align}\label{diffone}\frac{\mathrm{d}}{\mathrm{d}x} (x^{\nu} I_{\nu} (x) ) &= x^{\nu} I_{\nu -1} (x), \\
\label{diffKi}\frac{\mathrm{d}}{\mathrm{d}x} (x^{\nu} K_{\nu} (x) ) &= -x^{\nu} K_{\nu -1} (x), \\
\label{cat} \frac{\mathrm{d}}{\mathrm{d}x}(K_{\nu}(x))&=-\frac{1}{2}(K_{\nu+1}(x)+K_{\nu-1}(x)), \\
\label{cat111} \frac{\mathrm{d}}{\mathrm{d}x}(K_{\nu}(x))&=-K_{\nu-1}(x)-\frac{\nu}{x}K_{\nu}(x), \\
\label{cat222} \frac{\mathrm{d}}{\mathrm{d}x}(K_{\nu}(x))&=-K_{\nu+1}(x)+\frac{\nu}{x}K_{\nu}(x). 
\end{align}

\subsection{Integration}
\begin{equation} \label{pdfk} \int_{-\infty}^{\infty}\mathrm{e}^{\beta t} |t|^{\nu} K_{\nu}(|t|)\,\mathrm{d}t =\frac{\sqrt{\pi}\Gamma(\nu+1/2)2^{\nu}}{(1-\beta^2)^{\nu+1/2}}, \quad \nu>-1/2, \: -1<\beta <1.
\end{equation}

\bibliographystyle{amsplain}

\end{document}